\renewcommand*{\arraystretch}{1.3}
\newtheorem{theorem}{Theorem}
\newtheorem{corollary}[theorem]{Corollary}
\newtheorem{lemma}[theorem]{Lemma}
\newtheorem{proposition}[theorem]{Proposition}
\newtheorem*{ack}{Acknowledgements}
\newtheorem*{example}{Example}
\newtheorem*{remark}{Remark}
\newcommand{\cC}{{\mathcal{C}}}
\DeclareMathOperator{\cm}{CM}
\DeclareMathOperator{\tors}{tors}
\newcommand{\Q}{\mathbb Q}
\newcommand{\Qbar}{{\overline{\mathbb Q}}} 
\newcommand{\Z}{\mathbb Z}
\newcommand{\Gal}{\operatorname{Gal}}
\newcommand{\Aut}{\operatorname{Aut}}
\newcommand{\GL}{\operatorname{GL}}
\begin{document}

\bibliographystyle{plain}
\title[Torsion of rational elliptic curves over quintic fields]{Complete classification of the torsion structures of rational elliptic curves over quintic number fields}

\author{Enrique Gonz\'alez--Jim\'enez}
\address{Universidad Aut{\'o}noma de Madrid, Departamento de Matem{\'a}ticas, Madrid, Spain}
\email{enrique.gonzalez.jimenez@uam.es}
\subjclass[2010]{Primary: 11G05, 14G05; Secondary: 14H52, 11R21}
\keywords{Elliptic curves, torsion subgroup, rationals, quintic number fields.}
\thanks{The author was partially  supported by the grant MTM2015--68524--P.}

\begin{abstract}
We classify the possible torsion structures of rational elliptic curves over quintic number fields. In addition, let $E$ be an elliptic curve defined over $\Q$ and let $G = E(\Q)_{\tors}$ be the associated torsion subgroup. We study, for a given $G$, which possible groups $G \subseteq H$ could appear such that $H=E(K)_{\tors}$, for $[K:\Q]=5$. In particular, we prove that at most there is one quintic number field $K$ such that the torsion grows in the extension $K/\Q$, i.e.,  $E(\Q)_{\tors} \subsetneq E(K)_{\tors}$.
\end{abstract}

\maketitle

\section{Introduction}
Let $E/K$ be an elliptic curve defined over a number field $K$. The Mordell-Weil Theorem states that the set of $K$-rational points, $E(K)$, is a finitely generated abelian group. Denote by $E(K)_{\tors}$, the torsion subgroup of $E(K)$, which is isomorphic to $\cC_m\times\cC_n$ for two positive integers $m,n$, where $m$ divides $n$ and where $\cC_n$ is a cyclic group of order $n$. 


One of the main goals in the theory of elliptic curves is to characterize the possible torsion structures over a given number field, or over all number fields of a given degree.  In 1978 Mazur \cite{Mazur1978} published a proof of Ogg's conjecture (previously established by Beppo Levi), a milestone in the theory of elliptic curves. In that paper, he proved that the possible torsion structures over $\Q$ belong to the set:
$$
\Phi(1) = \left\{ \cC_n \; | \; n=1,\dots,10,12 \right\} \cup \left\{ \cC_2 \times \cC_{2m} \; | \; m=1,\dots,4 \right\},
$$
and that any of them occurs infinitely often. A natural generalization of this theorem is as follows. Let $\Phi(d)$ be the set of possible isomorphic torsion structures $E(K)_{\tors}$, where $K$ runs through all number fields $K$ of degree $d$ and $E$ runs through all elliptic curves over $K$. Thanks to the  uniform boundedness theorem \cite{Merel}, $\Phi(d)$ is a finite set. Then the problem is to determine $\Phi(d)$. Mazur obtained the rational case ($d=1$). The generalization to quadratic fields ($d=2$) was obtained by Kamienny, Kenku and Momose \cite{K92, KM88}. For $d\ge 3$ a complete answer for this problem is still open, although there have been some advances in the last years.

However, more is known about  the subset $\Phi^\infty(d)\subseteq \Phi(d)$ of torsion subgroups that arise for infinitely many $\Qbar$-isomorphism classes of elliptic curves defined over number fields of degree $d$. For $d=1$ and $d=2$ we have $\Phi^\infty(d)=\Phi(d)$, the cases $d=3$ and $d=4$ have been determined by Jeon et al. \cite{JKP04,JKP06}, and recently the cases $d=5$ and $d=6$ by Derickx and Sutherland \cite{DS16}.

Restricting our attention to the complex multiplication case, we denote  $\Phi^{\cm}(d)$ the analogue of the set $\Phi(d)$ but restricting to elliptic curves with complex multiplication (CM elliptic curves in the sequel). In 1974 Olson \cite{Olson74} determined the set of possible torsion structures over $\Q$ of CM elliptic curves:
$$
\Phi^{\cm}(1)=\left\{ \cC_1\,,\,  \cC_2\,,\,  \cC_3\,,\,  \cC_4\,,\,  \cC_6\,,\, \cC_2\times\cC_2\right\}.
$$
The quadratic and cubic cases were determined by Zimmer et al. \cite{MSZ89,FSWZ90,PWZ97}; and recently, Clark et al. \cite{Clark2014} have computed the sets $\Phi^{\cm}(d)$, for $4\le d\le 13$. In particular, they proved 
$$
\Phi^{\cm}(5)=\Phi^{\cm}(1)\cup\{\,\cC_{11}\,\}.
$$

In addition to determining $\Phi(d)$, there are many authors interested in the question of how the torsion grows when the field of definition is enlarged. We focus our attention when the underlying field is $\Q$. In analogy to $\Phi(d)$, let $\Phi_\Q(d)$  be the subset of $\Phi(d)$  such that $H\in \Phi_\Q(d)$ if there is an elliptic curve $E/\Q$ and a number field $K$ of degree $d$ such that $E(K)_\text{tors}\simeq H$. One of the first general result is due to Najman \cite{N15a}, who determined $\Phi_{\mathbb Q}(d)$  for $d = 2, 3$. Chou \cite{chou} has given a partial answer to the classification of $\Phi_{\mathbb Q}(4)$. Recently, the author with Najman \cite{GJN16} have completed the classification of $\Phi_{\mathbb Q}(4)$ and $\Phi_\Q(p)$ for $p$ prime. Moreover, in \cite{GJN16} it has been proved that $E(K)_{\tors}=E(\Q)_{\tors}$ for all elliptic curves $E$ defined over $\Q$ and all number fields $K$ of degree $d$, where $d$ is not divisible by a prime $\leq 7$. In particular, $\Phi_\Q(d)=\Phi(1)$ if $d$ is not divisible by a prime $\leq 7$.

\

Our first result determines $\Phi_\Q(5)$.

\begin{theorem}\label{main1} The sets $\Phi_\Q(5)$ and $\Phi^{\cm}_\Q(5)$ are given by
$$
\begin{array}{lcl}
\Phi_{\mathbb Q}(5) & = & \left\{ \, \cC_n \; | \; n=1,\dots,12,25 \right\} \cup \left\{\, \cC_2 \times \cC_{2m} \; | \; m=1,\dots,4 \right\},\\
\Phi^{\cm}_{\mathbb Q}(5)& = & \left\{\, \cC_1\,,\, \cC_2\,,\,\cC_3\,,\,\cC_4\,,\,\cC_6\,,\,\cC_{11}\,,\, \cC_2 \times \cC_{2}\,\right\}.
\end{array}
$$
\end{theorem}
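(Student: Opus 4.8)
The plan is to establish each equality as two inclusions, the inclusion ``$\supseteq$'' being the routine one. For every group $G$ in $\Phi(1)$ (resp.\ $\Phi^{\cm}(1)$) one takes an explicit curve $E/\Q$ (with CM when needed) having $E(\Q)_{\tors}=G$ together with a quintic field $K$ over which the torsion does not grow --- any field ramified only at a prime of good reduction not dividing $|G|$ will do --- so that $E(K)_{\tors}=G$. The two groups not already in $\Phi(1)$ are realized by explicit examples: for $\cC_{11}$, a suitable quadratic twist of the curve with $j=-2^{15}$, which has complex multiplication by the maximal order of $\Q(\sqrt{-11})$ and a rational $11$-isogeny whose kernel character can be made of order $5$, base-changed to $K=\Q(\zeta_{11})^{+}$ (the quintic subfield of $\Q(\zeta_{11})$); this example also furnishes the CM realization. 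For $\cC_{25}$, an explicit non-CM curve with a rational point of order $5$ whose mod-$25$ Galois image is small enough that a point of order $25$ becomes rational over the quintic subfield $K$ of its $25$-division field, with $E(K)_{\tors}=\cC_{25}$ checked directly.

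For the inclusion ``$\subseteq$'' I would argue prime by prime. Write $G=E(\Q)_{\tors}\subseteq E(K)_{\tors}=H$; since $[K:\Q]=5$ is prime, any point of $H\setminus G$ has degree exactly $5$ over $\Q$. For $p\in\{2,3\}$ the degree of a point of $p$-power order divides $|\GL_2(\Z/p^{a}\Z)|$, an integer divisible only by $2$ and $3$, hence not equal to $5$; so the $2$- and $3$-parts of $G$ and $H$ coincide. For a prime $p\ge 7$ a new point of order $p$ would have degree $5$, forcing $5\mid|\GL_2(\F_p)|=p(p-1)^{2}(p+1)$, i.e.\ $p\equiv 0,1,4\pmod 5$; for such $p\ge 19$ this is excluded by the known bounds on the minimal degree of a $p$-torsion point of a rational elliptic curve --- concretely, by Mazur's theorem on rational isogenies of prime degree together with the classification of subgroups of $\GL_2(\F_p)$: a Borel image would force the isogeny character on the kernel to have order $5$, impossible since $5\nmid p-1$ for every isogeny prime $p\ge 17$, whereas a Cartan-normalizer or exceptional image forces the minimal degree to exceed $5$. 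Thus growth of $H$ relative to $G$ can occur only at the primes $5$ and $11$, and (using that $\cC_{11}$-torsion excludes rational $2$-torsion) the parts of $H$ at primes other than $5$ already lie in $\Phi(1)$.

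For $p=11$, the step above forces a rational $11$-isogeny with kernel character of order $5$; then $j(E)$ is one of the finitely many non-cuspidal points of $X_0(11)(\Q)$, and a short case analysis over the quadratic twists shows the resulting growth is exactly to $\cC_{11}$ over $\Q(\zeta_{11})^{+}$, for a curve with trivial rational torsion. For $p=5$, the condition $\zeta_5\notin\Q$ gives $E(\Q)[5]\in\{0,\cC_5\}$ and $\cC_5\times\cC_5\not\subseteq H$, so the $5$-part of $H$ is cyclic; combined with the absence of growth at every other prime, the only candidates for $H$ exhibiting genuine growth at $5$ are $\cC_5,\cC_{10},\cC_{25}$. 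Assembling the steps gives $H\in\Phi(1)\cup\{\cC_{11},\cC_{25}\}$. For the CM statement, ``$\subseteq$'' follows from $\Phi^{\cm}_\Q(5)\subseteq\Phi^{\cm}(5)=\Phi^{\cm}(1)\cup\{\cC_{11}\}$ of \cite{Clark2014}, and alternatively from the prime-by-prime argument: $5$ is unramified in every CM field with $\Q$-rational $j$-invariant (none of the thirteen discriminants is divisible by $5$), so the mod-$5$ image lies in a Cartan normalizer, a group of order prime to $5$, hence has no orbit of length $5$ on $E[5]\setminus\{O\}$ and there is no $5$-power growth, while only the discriminant $-11$ curve grows, to $\cC_{11}$.

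The step I expect to be the main obstacle is $p=5$: showing that a rational elliptic curve cannot acquire a point of order $125$ over a quintic field (so that the $5$-part of $H$ is at most $\cC_{25}$), and that curves with a rational point of order $10$ or $12$ acquire no further $5$-power torsion in degree $5$. Concretely this requires controlling the mod-$25$ Galois image --- and, for the non-growth statements, the $5$-adic image --- of a rational curve with a rational $5$-isogeny, and determining the factorization of the fibers of $X_1(25)\to X_1(5)$ and $X_1(125)\to X_1(25)$ above the relevant rational points of $X_1(5)$. Kenku's theorem that no cyclic rational isogeny has degree $125$ is the natural input, but deducing from it the needed statement about $125$-torsion points of degree $5$ (whose generated cyclic subgroup need not be $\Gal(\overline{\Q}/\Q)$-stable) is where the genus-$5$ curve $X_1(25)$ and its covers make the bookkeeping delicate; the analogous fiber-product computations over $X_1(7)$, $X_1(8)$, $X_1(9)$ needed to rule out growth at $5$ on top of large rational torsion are of the same flavor.
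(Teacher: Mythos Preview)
Your overall architecture matches the paper's: reduce to primes, observe that $5\nmid|\GL_2(\Z/p^a\Z)|$ for $p\in\{2,3,7,13\}$ so the $p$-primary part cannot grow, handle $p=11$ via the Borel/isogeny classification, and isolate $p=5$ as the crux. The CM argument you sketch (Cartan normalizer at $5$ has order prime to $5$) is a nice alternative to the paper's direct appeal to $\Phi^{\cm}(5)$.

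There is, however, a real gap in your treatment of the $5$-primary analysis, and it is not the one you flag. You assert that ``the only candidates for $H$ exhibiting genuine growth at $5$ are $\cC_5,\cC_{10},\cC_{25}$'' and then pass to $H\in\Phi(1)\cup\{\cC_{11},\cC_{25}\}$. But nothing you have argued excludes, for instance, $G=\cC_3$ growing to $H=\cC_{15}$, or $G=\cC_4$ to $\cC_{20}$, or $G=\cC_2\times\cC_2$ to $\cC_2\times\cC_{10}$: in each case the $\{2,3,7\}$-part is unchanged and the $5$-part is cyclic of order $5$, consistent with everything you have shown. Your list of obstacles mentions fiber products over $X_1(7)$, $X_1(8)$, $X_1(9)$ and the cases $\cC_{10}$, $\cC_{12}$, but omits the small-torsion cases $\cC_3,\cC_4,\cC_6,\cC_2\times\cC_2$ entirely.

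These omitted cases are exactly what the paper's Lemma~\ref{lem_pq} handles, and they are not mere bookkeeping. Ruling out $\cC_{15}$ is the hardest step: one must intersect the $j$-line for $G_E(5)$ of type \texttt{5B.1.2} with the $j$-lines for \texttt{3Cs.1.1} and \texttt{3B.1.1}, which leads respectively to a genus-$2$ curve $y^2=x^6+22x^3+125$ (Chabauty) and a genus-$1$ curve isomorphic to \texttt{15a3}, followed by a twist analysis on two surviving $j$-invariants. The cases $\cC_{20}$ and $\cC_2\times\cC_{10}$ are easier but still require input beyond what you cite (the paper invokes \cite{GL16}), and $\cC_{35}$ falls to the isogeny theorem. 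Until these mixed $\{5,q\}$-primary cases are dispatched, the inclusion $\Phi_\Q(5)\subseteq\Phi(1)\cup\{\cC_{11},\cC_{25}\}$ is not established.
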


\begin{remark}
$\Phi_{\mathbb Q}(5)=\Phi_{\mathbb Q}(1)\cup\{\,\cC_{11},\,\cC_{25}\,\}$ and $\Phi^{\cm}_{\mathbb Q}(5)=\Phi^{\cm}(5)=\Phi^{\cm}(1)\cup\{\,\cC_{11}\,\}$.
\end{remark}

For a fixed $G \in \Phi(1)$, let $\Phi_\Q(d,G)$ be the subset of $\Phi_\Q(d)$ such that $E$ runs through all elliptic curves over $\Q$  with $E(\Q)_{\tors}\simeq G$. For each $G\in\Phi(1)$ the sets $\Phi_{\mathbb Q}(d,G)$ have been determined for $d=2$ in \cite{K97, GJT14}, for $d=3$ in \cite{GJNT15} and partially for $d=4$ in \cite{GL16}.

\

Our second result determines $\Phi_\Q(5)$ for any $G \in \Phi(1)$.

\begin{theorem}\label{main2}
For $G \in \Phi(1)$, we have $\Phi_\Q(5,G)= \{G\}$, except in the following cases:
$$
\begin{array}{|c|c|}
\hline
G & \Phi_\mathbb{Q} \left(5,G \right)\\
\hline
\cC_1 & \left\{ \cC_1\,,\,\cC_{5}\,,\,   \cC_{11} \, \right\} \\
\hline
\cC_2 &\begin{array}{c} \left\{ \cC_2\,,\cC_{10}\, \right\} 
   \end{array}\\ 
\hline
\cC_5 & \left\{ \cC_5\,,\, {  \cC_{25} }\,\right\} \\
\hline
\end{array}
$$
Moreover, there are infinitely many $\Qbar$-isomorphism classes of elliptic curves $E/\Q$ with $H\in \Phi_\mathbb{Q} \left(5,G \right)$, except for the case $H=\cC_{11}$ where only the elliptic curves \texttt{121a2}, \texttt{121c2}, \texttt{121b1} have eleven torsion over a quintic number field. 
\end{theorem}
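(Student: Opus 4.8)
The plan is to combine the global classification of $\Phi_\Q(5)$ from Theorem \ref{main1} with a careful analysis, group $G$ by group $G$, of which enlargements $G \subsetneq H$ with $H \in \Phi_\Q(5)$ are actually realizable. First I would record the obvious inclusion $\{G\} \subseteq \Phi_\Q(5,G)$, so the content is to pin down the extra groups. For each $G \in \Phi(1)$, if $H = E(K)_{\tors}$ for some quintic $K$, then $G = E(\Q)_{\tors} \subseteq H$ and $H \in \Phi_\Q(5)$; moreover $[H:G]$ must divide $[E(K):E(\Q)]$, which in turn divides the degree over $\Q$ of the compositum of the fields of definition of the new torsion points, hence divides some power of $5$ or — by considering the Galois action on $E[n]$ and the resulting constraints on $\Gal(\Q(E[n])/\Q) \hookrightarrow \GL_2(\Z/n)$ — a small bounded quantity. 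Concretely, since $[K:\Q]=5$ is prime, any point $P \in E(K) \setminus E(\Q)$ has $[\Q(P):\Q] = 5$, so the index $[E(K)_{\tors}:E(\Q)_{\tors}]$ is a power of $5$, and one checks from the list $\Phi_\Q(5)$ that the only possibilities with $5 \mid [H:G]$ are: $G=\cC_1$ with $H \in \{\cC_5,\cC_{25}\}$ (and $\cC_{11}$, which is the exceptional non-$5$-power case), $G=\cC_2$ with $H=\cC_{10}$, and $G=\cC_5$ with $H=\cC_{25}$. This already forces $\Phi_\Q(5,G) = \{G\}$ for every $G$ not appearing in the table, and narrows the table cases to exactly the listed candidates plus the need to rule in/out $\cC_{11}$ over $\cC_1$.

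Next I would establish realizability for each surviving candidate $H$ and settle the infinitude claim. For $H \in \{\cC_5, \cC_{25}, \cC_{10}\}$ over the respective base groups, the plan is to exhibit explicit one-parameter families: an elliptic curve $E/\Q$ with trivial (resp. $\cC_2$, resp. $\cC_5$) rational torsion acquires a $5$-torsion point over the quintic field cut out by a degree-$5$ factor of the $5$-division polynomial, and by choosing the parameter appropriately one gets a positive-dimensional family of $\Qbar$-non-isomorphic curves — this uses that the relevant modular curves ($X_1(5)$, $X_1(25)$, $X_1(10)$, or the appropriate fiber products recording the prescribed rational torsion) have infinitely many quintic points, which can be read off from their genus and gonality, or more directly from known parametrizations; this is precisely the kind of input underlying $\Phi_\Q(5)$ in Theorem \ref{main1}, so much of it can be quoted. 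For the $\cC_{11}$ case the plan is different and essentially finite: by Clark et al., $\cC_{11}$ occurs over a quintic field only for CM reasons governed by $X_1(11)$, whose quintic points are (by a Chabauty/Mordell–Weil sieve argument over the degree-$5$ symmetric power, or by the explicit work cited) accounted for by finitely many $j$-invariants, and tracing these back to curves over $\Q$ yields exactly the three curves \texttt{121a2}, \texttt{121c2}, \texttt{121b1}; here I would verify directly that each of these three curves has an $11$-torsion point over a quintic subfield of $\Q(\zeta_{11})$ (the index-$2$ subfield), and that no other rational curve does, by checking that any such curve would be a quadratic twist with the wrong conductor or would force a rational $11$-isogeny not on Mazur's list.

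The main obstacle I expect is the $\cC_{11}$ analysis: ruling out all but three curves requires knowing that the only elliptic curves over $\Q$ with a point of order $11$ over a quintic field are the relevant twists of the curve $X_1(11)$ itself (equivalently, the curves with a rational $11$-isogeny, which by Mazur's isogeny theorem are exactly the conductor-$121$ curves), together with the computation of the Galois module structure of $E[11]$ for those curves to see that the order-$11$ point is defined over the quintic field $\Q(\zeta_{11})^+ \cdot(\text{something})$ rather than only over a larger field — this is a genuinely delicate, case-specific computation rather than a formal argument. A secondary difficulty is making the infinitude statements fully rigorous: one must exhibit the quintic points on the relevant modular curve as varying in a genuine family (not, say, all conjugate over a fixed number field or all with the same $j$), which I would handle by producing an explicit rational parametrization of a suitable component and checking non-constancy of the $j$-invariant along it.
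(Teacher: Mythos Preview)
Your overall strategy---reduce to new torsion only at the primes $5$ and $11$ by observing that $[\Q(P):\Q]$ divides both $5$ and $|\GL_2(\Z/p^n\Z)|$, hence is $1$ for $p\in\{2,3,7\}$---is exactly what the paper does (its Lemma~\ref{lem_p} and Proposition~\ref{propG}). So the elimination of all $G\notin\{\cC_1,\cC_2,\cC_5\}$ is fine.

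There are, however, two genuine problems.

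\textbf{The pair $(\cC_1,\cC_{25})$ is never eliminated.} Your divisibility argument leaves $(\cC_1,\cC_{25})$ on the table as a candidate, and you then pass directly to ``establish realizability for each surviving candidate''. But $\cC_{25}\notin\Phi_\Q(5,\cC_1)$, so you must \emph{rule this pair out}, not realize it. This is the nontrivial part of the paper's Lemma~\ref{lem5}(\ref{i1_lem5}): one checks, by running through all subgroups $G\le\GL_2(\Z/25\Z)$ whose reduction mod~$5$ is conjugate to \texttt{5B.1.2} (the only label giving a point of order $5$ over a degree-$5$ field but not over $\Q$), that no element $v\in(\Z/25\Z)^2$ of order $25$ has stabilizer of index $5$. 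In other words, if $E(K)$ has a point of order $25$ with $[K:\Q]=5$, then $E(\Q)$ already has a point of order $5$. Nothing in your outline supplies this step, and the index-is-a-power-of-$5$ heuristic cannot distinguish $(\cC_1,\cC_{25})$ from $(\cC_5,\cC_{25})$.

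\textbf{The $\cC_{11}$ analysis is misdirected.} Your claim that ``$\cC_{11}$ occurs over a quintic field only for CM reasons'' is false: of the three curves \texttt{121a2}, \texttt{121c2}, \texttt{121b1}, the first two are non-CM. Your proposed Chabauty argument on quintic points of $X_1(11)$ also cannot work as stated, since $X_1(11)$ has genus~$1$ and therefore infinitely many quintic points; the constraint is that $j(P)\in\Q$, which is a different and harder condition to impose. The paper instead uses Zywina's classification of the possible images $G_E(11)\subset\GL_2(\F_{11})$ for $E/\Q$: among all such images (non-CM and CM separately), only the labels \texttt{11B.1.4}, \texttt{11B.1.5}, and \texttt{11B.1.3} admit an index-$5$ point stabilizer, and these occur precisely for \texttt{121a2}, \texttt{121c2}, and \texttt{121b1}. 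Your remark that any such curve ``would force a rational $11$-isogeny'' is true---all three curves do have one---but the isogeny alone does not pin down which of the Borel-type images occurs, and several of them (e.g.\ \texttt{11B.10.4}) have no index-$5$ stabilizer; that finer information is what the image classification provides.
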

In fact, it is possible to give a more detailed description of how the torsion grows. For this purpose for any $G\in \Phi(1)$ and any positive integer $d$,  we define the set
$$
\mathcal{H}_{\Q}(d,G) = \{ S_1,...,S_n \}
$$
where $S_i= \left[ H_1,...,H_m \right]$ is a list of groups $H_j \in \Phi_{\mathbb Q}(d,G) \setminus \{ G \}$, such that, for each $i=1,\ldots,n$, there exists an elliptic curve $E_i/\Q$ that satisfies the following properties:
\begin{itemize}
\item $E_i({\mathbb Q})_{\text{tors}} \simeq G$, and
\item there are number fields $K_1,...,K_m$ (non--isomorphic pairwise) whose degrees divide $d$ with $E_i ( K_j )_\text{tors} \simeq H_j$, for all $j=1,...,m$;  and for each $j$ there does not exist $K'_j\subset K_j$ such that $E_i ( K'_j )_\text{tors} \simeq H_j$.
\end{itemize}
We are allowing the possibility of two (or more) of the $H_j$ being isomorphic. The above sets have been completely determined for the quadratic case ($d=2$)  in \cite{GJT15}, for the cubic case ($d=3$) in  \cite{GJNT15} and computationally conjectured for the quartic case ($d=4$) in \cite{GL16}. The quintic case ($d=5$) is treated in this paper, and the next result determined $\mathcal{H}_{\Q}(5,G)$ for any $G\in\Phi(1)$:


\begin{theorem}\label{main3}
For $G \in \Phi(1)$, we have $\mathcal{H}_{\Q}(5,G)=\emptyset$, except in the following cases:
\begin{center}
\begin{tabular}{|c|c|}
\hline
$G$ & $\mathcal{H}_{\Q}(5,G)$\\
\hline
\multirow{2}{*}{$\cC_1$} & $\cC_5$ \\
\cline{2-2}
& {$\cC_{11}$}   \\
\hline
{$\cC_2$}  & {$\cC_{10}$}   \\
\hline
{$\cC_5$}  & {$\cC_{25}$}   \\
\hline
\end{tabular}
\end{center}
In particular, for any elliptic curve $E/\Q$, there is at most one quintic number field $K$, up to isomorphism, such that $E(K)_{\tors}\ne E(\Q)_{\tors}$.
\end{theorem}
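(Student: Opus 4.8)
The plan is to treat the three theorems as a unit: Theorem~\ref{main3} refines Theorem~\ref{main2}, which refines Theorem~\ref{main1}, and the key structural input is a control on \emph{degrees} of points, not just on isomorphism types. First I would recall (from the cited work, especially \cite{GJN16,DS16,Clark2014}) exactly which groups $H$ with $G\subsetneq H$ can occur as $E(K)_{\tors}$ for a quintic $K$ and an $E/\Q$ with $E(\Q)_{\tors}\simeq G$; by Theorem~\ref{main2} the only relevant pairs are $(G,H)\in\{(\cC_1,\cC_5),(\cC_1,\cC_{11}),(\cC_2,\cC_{10}),(\cC_5,\cC_{25})\}$, and for every other $G$ one has $\Phi_\Q(5,G)=\{G\}$, so $\mathcal H_\Q(5,G)=\emptyset$ trivially. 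So the real content is: in each of these four cases, the growth happens over \emph{at most one} quintic field, and never simultaneously over a quadratic and a quintic (no proper subfield issue arises since $5$ is prime, so the only subfields of a quintic $K$ are $\Q$ and $K$).

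The heart of the argument is a point-counting / divisor argument on the relevant modular curve. For each prime power $N\in\{5,11,25\}$ one works with $X_1(N)$ (or $X_1(2,10)$ for the $\cC_{10}$ case) and the point $P=(E,\text{level structure})$: if $E(K)_{\tors}\supseteq\cC_N$ with $[K:\Q]=5$, then $E$ (with its $\Q$-rational cyclic $N$-isogeny, which it must have since $E/\Q$) corresponds to a point on $X_0(N)(\Q)$ whose fibre in $X_1(N)$ contains a point of degree dividing $5$ over $\Q$. The key step is to show that \emph{for a fixed $E/\Q$} there is at most one such quintic field. For $N=11$ this is immediate from the last sentence of Theorem~\ref{main2}: only the three curves $\mathtt{121a2},\mathtt{121c2},\mathtt{121b1}$ acquire $11$-torsion over a quintic field, and for each of these one checks directly (e.g.\ via the explicit degree-$5$ subfield of $\Q(E[11])$ cut out by the relevant character, as in \cite{GJN16}) that the quintic field is unique. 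For $N=5$: $X_1(5)\cong\PP^1$, the rational points parametrise the universal curve with a rational $5$-torsion point, so if $E/\Q$ has no rational $5$-torsion but gains it over a quintic $K$, then $K$ is the field of definition of a $5$-torsion point $P$; the Galois action on $E[5]\setminus\{O\}$ gives $K$ as the fixed field of $\mathrm{Stab}(P)$ in $\mathrm{Gal}(\Q(E[5])/\Q)\hookrightarrow\GL_2(\F_5)$, and one argues that a single $E$ can have at most one index-$5$ subgroup giving rise to a quintic (the $\GL_2(\F_5)$-orbit structure on lines/nonzero vectors, together with $[K:\Q]=5$ forcing a very specific image, pins it down). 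The $\cC_{10}$ case reduces to the $\cC_5$ case (the $\cC_2$ is already rational, so only the $5$-part can grow and it grows over the same field), and the $\cC_{25}$ case is the subtlest: here $E/\Q$ already has a rational $5$-torsion point, $X_1(25)$ has genus $0$ but only finitely many — in fact a known explicit finite list, or a one-parameter family — of relevant points, and one must show the degree-$5$ point over which the $25$-torsion appears is unique for each $E$; I would do this by analysing the extension $\Q(E[25])/\Q(E[5])$ and the line through the chosen rational $5$-torsion point in $E[25]$.

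I expect the main obstacle to be the $\cC_{25}$ case and, more generally, making the ``at most one quintic field'' claim airtight rather than just ``finitely many''. The danger is a curve $E/\Q$ whose mod-$N$ (or mod-$N^2$) image is small enough that \emph{several} non-isomorphic quintic subfields of the relevant torsion field each support the full group $H$; one must rule this out using the classification of possible images (e.g.\ Sutherland's tables, or the explicit description of the $\GL_2$-images compatible with a rational isogeny and a degree-$5$ point). A secondary subtlety is bookkeeping the ``no proper subfield'' and ``non-isomorphic pairwise'' conditions in the definition of $\mathcal H_\Q(5,G)$: since $5$ is prime this mostly collapses, but one still has to confirm that the growth over the quintic is not ``inherited'' from a quadratic — i.e.\ that $E(K)_{\tors}=E(\Q)_{\tors}$ for all quadratic $K$ in every case where growth occurs quintically — which follows by combining Theorem~\ref{main2} with the known classification of $\Phi_\Q(2,G)$ from \cite{K97,GJT14}. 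Once uniqueness is established in each of the four cases, the final sentence ``at most one quintic number field $K$ with $E(K)_{\tors}\ne E(\Q)_{\tors}$'' is immediate, and the table for $\mathcal H_\Q(5,G)$ is just the list of the four singleton lists $[\cC_5],[\cC_{11}],[\cC_{10}],[\cC_{25}]$ recorded against $\cC_1,\cC_1,\cC_2,\cC_5$ respectively.
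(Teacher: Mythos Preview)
Your outline follows essentially the same route as the paper: reduce via Theorem~\ref{main2} to the four pairs $(G,H)$, then for each show that the quintic field over which growth occurs is unique up to isomorphism by analysing the mod-$N$ Galois image and arguing that the relevant index-$5$ point-stabilisers are all conjugate. The paper makes this concrete as follows. For $(\cC_1,\cC_5)$ and $(\cC_2,\cC_{10})$, Lemma~\ref{lem5} forces $G_E(5)$ to be the group labeled \texttt{5B.1.2}, which is the Frobenius group $\mathcal F_5$ of order~$20$; the paper then simply exhibits the subgroup lattice of $\mathcal F_5$ and observes that its five index-$5$ subgroups form a single conjugacy class, so their fixed quintic fields are all isomorphic. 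For $(\cC_5,\cC_{25})$ the paper proceeds as you anticipated (work inside $G_E(25)$), but rather than a structural argument it runs a \texttt{Magma} enumeration over all subgroups $G\le\GL_2(\Z/25\Z)$ of order $\le 25$ whose reduction mod~$5$ is (conjugate to) $H_{6,1}$ or $H_{1,1}$ and which admit an index-$5$ stabiliser of a vector of order~$25$; every such $G$ turns out to be isomorphic to $\mathcal F_5$ or $\cC_{20}$, and in both groups the index-$5$ subgroups again lie in a single conjugacy class (or are unique). The $\cC_{11}$ case is handled exactly as you say, via Lemma~\ref{lem11}.

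Two small corrections. First, $X_1(25)$ has genus~$12$, not~$0$ (it is $X_0(25)$ that has genus~$0$); this slip is harmless since your actual argument does not use the modular-curve genus. Second, your worry about growth ``inherited from a quadratic'' is vacuous: since $5$ is prime, the only subfields of a quintic $K$ are $\Q$ and $K$, so the ``no proper subfield'' clause in the definition of $\mathcal H_\Q(5,G)$ is automatic once $H\ne G$, and no appeal to $\Phi_\Q(2,G)$ is needed.
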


\begin{remark}
Notice that for any CM elliptic curve $E/\Q$ and any quintic number field $K$ it has $E(K)_{\tors}=E(\Q)_{\tors}$, except to the elliptic curve \texttt{121b1} and $K=\Q(\zeta_{11})^+=\Q(\zeta_{11}+\zeta_{11}^{-1})$ where $E(\Q)_{\tors}\simeq \cC_1$ and  $E(K)_{\tors}\simeq \cC_{11}$.
\end{remark}

Let us define
$$
h_{\Q}(d)=\max_{G \in \Phi(1)} \Big\{ \#S \; \Big| \; S \in \mathcal{H}_{\Q}(d,G) \Big\}.
$$
The values $h_{\Q}(d)$ have been computed for $d=2$ and $d=3$ in \cite{GJT15} and \cite{GJNT15} respectively. For $d=4$ we computed a lower bound in \cite{GL16}. For $d=5$ we have:
\begin{corollary}
$h_\Q(5)=1$.
\end{corollary}

\begin{remark}
In particular, we have deduced the following:
$$
\begin{array}{|c|c|c|c|c|}
\hline
d & 2 & 3 & 4 & 5\\
\hline
h_{\Q}(d) & 4 & 3 & \ge 9& 1\\
\hline  
\end{array}
$$
\end{remark}

\subsubsection*{Notation} 
 We will use the Antwerp--Cremona tables and labels \cite{antwerp,cremonaweb} when referring to specific elliptic curves over $\Q$.

For conjugacy classes of subgroups of $\GL_2(\Z/p\Z)$ we will use the labels introduced by Sutherland in \cite[\S 6.4]{Sutherland2}.

We will write $G\simeq H$ (or $G\lesssim H$) for the fact that $G$ is isomorphic to $H$ (or to a subgroup of $H$ resp.) without further detail on the precise isomorphism.

For a positive integer $n$  we will write $\varphi(n)$ for the Euler-totient function of $n$.

We use $\mathcal O$ to denote the point at infinity of an elliptic curve (given in Weierstrass form).


\section{Mod $n$ Galois representations associated to elliptic curves}
Let $E/\Q$ be an elliptic curve and $n$ a positive integer. We denote by $E[n]$ the $n$-torsion subgroup of $E(\Qbar)$, where $\Qbar$ is a fixed algebraic closure of $\Q$. That is, $E[n]=\{P\in E(\Qbar)\,|\, [n]P=\mathcal O\}$. The absolute Galois group $\Gal(\Qbar/\Q)$ acts on $E[n]$ by its action on the coordinates of the points, inducing a Galois representation  
$$
\rho_{E,n}\,:\,\Gal(\Qbar/\Q)\longrightarrow \Aut(E[n]). 
$$
Notice that since $E[n]$ is a free $\Z/n\Z$-module of rank $2$, fixing a basis $\{P,Q\}$ of $E[n]$, we identify $ \Aut(E[n])$ with $\GL_2(\Z/n\Z)$. Then we rewrite the above Galois representation as
$$
\rho_{E,n}\,:\,\Gal(\Qbar/\Q)\longrightarrow\GL_2(\Z/n\Z).
$$
Therefore we can view $\rho_{E,n}(\Gal(\Qbar/\Q))$ as a subgroup of $\GL_2(\Z/n\Z)$, determined uniquely up to conjugacy, and denoted by $G_E(n)$ in the sequel. Moreover, $\Q(E[n])=\{x,y\,|\, (x,y)\in E[n]\}$ is Galois and since $\ker{\rho_{E,n}}=\Gal(\Qbar/\Q(E[n]))$, we deduce that $G_E(n)\simeq\Gal(\Q(E[n])/\Q)$.

Let $R=(x(R),y(R))\in E[n]$ and $\Q(R)=\Q(x(R),y(R))\subseteq \Q(E[n])$, then by Galois theory there exists a subgroup $\mathcal H_R$ of  $\Gal(\Q(E[n])/\Q)$ such that $\Q(R)=\Q(E[n])^{\mathcal H_R}$. In particular, if we denote by $H_R$ the image of $\mathcal H_R$ in $\GL_2(\Z/n\Z)$, we have:
\begin{itemize}
\item $[\Q(R):\Q]=[G_E(n):H_R]$.
\item $\Gal(\widehat{\Q(R)}/\Q)\simeq G_E(n)/N_{G_E(n)}(H_R)$, where $\widehat{\Q(R)}$ denotes the Galois closure of $\Q(R)$ in $\Qbar$, and $N_{G_E(n)}(H_R)$ denotes the normal core of $H_R$ in $G_E(n)$.
\end{itemize}
We have deduced the following result.
\begin{lemma}\label{lem_order_n}
Let $E/\Q$ be an elliptic curve, $n$ a positive integer and $R\in E[n]$. Then
$[\Q(R):\Q]$ divides $|G_E(n)|$. In particular $[\Q(R):\Q]$ divides $|\GL_2(\Z/n\Z)|$. \end{lemma}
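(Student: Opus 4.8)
The plan is to read everything off the index relation established in the paragraph immediately preceding the statement. First I would recall the Galois-theoretic dictionary set up there: having fixed a basis of $E[n]$ and identified $\Aut(E[n])$ with $\GL_2(\Z/n\Z)$, the field $\Q(E[n])$ is a finite Galois extension of $\Q$ with $\Gal(\Q(E[n])/\Q)\simeq G_E(n)$, and the intermediate field $\Q(R)$ corresponds under the Galois correspondence to the subgroup $\mathcal H_R$ whose image $H_R\le G_E(n)$ is the stabilizer of $R$; hence $[\Q(R):\Q]=[G_E(n):H_R]$. (Equivalently, one can phrase this via orbit--stabilizer: $G_E(n)$ acts on $E[n]$, the orbit of $R$ is exactly its set of $\Gal(\Qbar/\Q)$-conjugates, and $H_R$ is the stabilizer, so $[\Q(R):\Q]=\#\mathrm{Orb}(R)=[G_E(n):H_R]$.)

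From here it is a double application of Lagrange's theorem. Since $H_R$ is a subgroup of the finite group $G_E(n)$, the index $[G_E(n):H_R]$ divides $|G_E(n)|$, so $[\Q(R):\Q]$ divides $|G_E(n)|$. For the second assertion, $G_E(n)$ is, by construction, a subgroup of $\GL_2(\Z/n\Z)$ (well defined up to conjugacy), so $|G_E(n)|$ divides $|\GL_2(\Z/n\Z)|$; chaining the two divisibilities gives that $[\Q(R):\Q]$ divides $|\GL_2(\Z/n\Z)|$.

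I do not expect any real obstacle here. The only point requiring a little care is that $\Q(R)$ need not itself be Galois over $\Q$, so the divisibility must be extracted from the index $[G_E(n):H_R]$ (or from the orbit size) rather than from a would-be quotient group; once the dictionary of the preceding paragraph is granted, the conclusion is immediate.
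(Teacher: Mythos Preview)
Your proposal is correct and follows exactly the paper's approach: the lemma is stated immediately after the paragraph establishing $[\Q(R):\Q]=[G_E(n):H_R]$, and the paper simply writes ``We have deduced the following result'' with no further argument. Your explicit invocation of Lagrange's theorem (for both $H_R\le G_E(n)$ and $G_E(n)\le\GL_2(\Z/n\Z)$) spells out precisely what the paper leaves implicit.
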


In practice, given the conjugacy class of $G_E(n)$ we can deduce the relevant arithmetic-algebraic properties of the fields of definition of the $n$-torsion points: since $E[n]$ is a free $\Z/n\Z$-module of rank $2$, we can identify the $n$-torsion points with $(a,b)\in (\Z/n\Z)^2$ (i.e. if $R\in E[n]$ and $\{P,Q\}$ is a $\Z/n\Z$-basis of  $E[n]$, then there exist $a,b\in \Z/n\Z$ such that $R=aP+bQ$). Therefore $H_R$ is the stabilizer of $(a,b)$ by the action of $G_E(n)$ on $(\Z/n\Z)^2$. In order to compute all the possible degrees (jointly with the Galois group of its Galois closure in $\Qbar$) of the fields of definition of the $n$-torsion points we run over all the elements of $(\Z/n\Z)^2$ of order $n$.

Now, observe that $\langle R\rangle\subset E[n]$ is a subgroup of order $n$. Equivalently, $E/\Q$ admits a cyclic $n$-isogeny (non-rational in general). The field of definition of this isogeny is denoted by $\Q(\langle R\rangle)$. A similar argument could be used to obtain a description of $\Q(\langle R\rangle)$ using Galois theory. In particular, if  $\langle R\rangle$ is $\Gal(\Qbar/\Q)$-stable then the isogeny is defined over $\Q$. To compute the relevant arithmetic-algebraic properties of the field $\Q(\langle R\rangle)$ is similar to the case $\Q(R)$, replacing the pair $(a,b)$ by the $\Z/n\Z$-module of rank $1$ generated by $(a,b)$ in $(\Z/n\Z)^2$.

In the case $E/\Q$ be a non-CM elliptic curve and $p\le 11$ be a prime, Zywina \cite{zywina} has described all the possible subgroups of $\GL_2(\Z/p\Z)$ that occur as $G_E(p)$. 

%

For each possible subgroup $G_E(p)\subseteq \GL_2(\Z/p\Z)$ for $p\in\{2, 3, 5, 11\}$, Table \ref{tableSutherland} \href{http://www.uam.es/personal_pdi/ciencias/engonz/research/tables/tors5/Table1.txt}{\color{blue}lists} in the first and second column the corresponding labels in Sutherland and Zywina notations, and the following data: 
\begin{itemize}
\item[$d_0$:] the index of the largest subgroup of $G_E(p)$ that fixes a $\Z/p\Z$-submodule of rank $1$ of $E[p]$; equivalently, the degree of the minimal extension $L/\Q$ over which $E$ admits a $L$-rational $p$-isogeny.
\item[$d_v$:] is the index of the stabilizers of $v\in(\Z/p\Z)^2$, $v\ne (0,0)$, by the action of $G_E(p)$ on $(\Z/p\Z)^2$; equivalently, the degrees of the extension $L/\Q$ over which $E$ has a $L$-rational point of order $p$.
\item[$d$:] is the order of $G_E(p)$; equivalently, the degree of the minimal extension $L/\Q$ for which $E[p]\subseteq E(L)$.
\end{itemize}
Note that Table \ref{tableSutherland} is partially extracted from Table 3 of \cite{Sutherland2}. The difference is that \cite[Table 3]{Sutherland2} only lists the minimum of $d_v$, which is denoted by $d_1$ therein.

\begin{table}
 \begin{tabular}{cc}
\begin{tabular}{@{\hskip -6pt}cllccc}
& \multicolumn{1}{c}{Sutherland}  &  \multicolumn{1}{c}{Zywina}  &$d_0$&$d_v $ & $d$\\\toprule
&\texttt{2Cs} & $G_1$  & 1 & 1 & 1\\ 
&\texttt{2B} &$G_2$ &1 & 1\,,\,2 & 2\\ 
&\texttt{2Cn} & $G_3$ & 3 & 3  & 3\\
 \multicolumn{3}{c}{$\,\,\,\,\,\,\GL(2,\Z/2\Z)$} & 3& 3& 6\\
\midrule
&\texttt{3Cs.1.1} &$H_{1,1}$ & 1& 1\,,\,2 & 2\\
&\texttt{3Cs} & $G_1$ &  1 & 2\,,\,4  & 4 \\
 &\texttt{3B.1.1} & $H_{3,1}$& 1 & 1\,,\,6  &6 \\
&\texttt{3B.1.2} &  $H_{3,2}$& 1 & 2\,,\,3  & 6\\
&\texttt{3Ns} &$G_2$ & 2 & 4  & 8\\
&\texttt{3B} & $G_3$ & 1 & 2\,,\,6  & 12\\
&\texttt{3Nn} & $G_4$ & 4 & 8  & 16\\
 \multicolumn{3}{c}{$\,\,\,\,\,\,\GL(2,\Z/3\Z)$} & 4& 8& 48\\
\midrule
&\texttt{11B.1.4} & $H_{1,1}$ & 1 & 5\,,\,110 & 110\\
&\texttt{11B.1.5} & $H_{2,1}$ & 1 & 5\,,\,110  & 110\\
&\texttt{11B.1.6} & $H_{2,2}$ & 1 & 10\,,\,55  & 110\\
&\texttt{11B.1.7} & $H_{1,2}$ & 1 & 10\,,\,55  & 110\\
&\texttt{11B.10.4} & $G_{1}$ & 1 & 10\,,\,110  & 220\\
&\texttt{11B.10.5} & $G_{2}$ & 1 & 10\,,\,110  & 220\\
&\texttt{11Nn} & $G_{3}$ & 12 & 120 & 240\\
 \multicolumn{3}{c}{$\,\,\,\,\,\,\GL(2,\Z/11\Z)$} & 12& 120& 13200\\
\bottomrule
\end{tabular}
&\qquad\quad
\begin{tabular}{@{\hskip -6pt}cllccc}
& \multicolumn{1}{c}{Sutherland}  &  \multicolumn{1}{c}{Zywina}   &$d_0$&$d_v $ & $d$\\\toprule
&\texttt{5Cs.1.1} & $H_{1,1}$ & 1 & 1\,,\,4  & 4\\
&\texttt{5Cs.1.3} & $H_{1,2}$& 1 & 2\,,\,4  & 4\\
&\texttt{5Cs.4.1} & $G_1$ & 1 & 2\,,\,4\,,\,8  & 8\\
&\texttt{5Ns.2.1} & $G_3$ & 2 & 8\,,\,16  & 16\\
&\texttt{5Cs} &  $G_2$ & 1 & 4  & 16\\
&\texttt{5B.1.1} & $H_{6,1}$ & 1 & 1\,,\,20  & 20\\
&\texttt{5B.1.2} & $H_{5,1}$ & 1 & 4\,,\,5  & 20\\
&\texttt{5B.1.4} & $H_{6,2}$ & 1 & 2\,,\,20  & 20\\
&\texttt{5B.1.3} & $H_{5,2}$ & 1 & 4\,,\,10 & 20\\
&\texttt{5Ns} & $G_{4}$ & 2 & 8\,,\,16  & 32\\
&\texttt{5B.4.1} & $G_{6}$ & 1 & 2\,,\,20  & 40\\
&\texttt{5B.4.2} & $G_{5}$ & 1 & 4\,,\,10  & 40\\
&\texttt{5Nn} & $G_{7}$ & 6 & 24  & 48\\
&\texttt{5B} & $G_{8}$ & 1 & 4\,,\,20 & 80\\
&\texttt{5S4} & $G_{9}$ & 6 & 24  & 96 \\
\multicolumn{3}{c}{$\,\,\,\,\,\,\GL(2,\Z/5\Z)$}  & 6& 24& 480\\
 \bottomrule
  & & & &  \\
  & & & &  \\
  & & & &  \\
 & & & &  \\
\end{tabular}
\bigskip
\smallskip
\end{tabular}
\caption{Image groups $G_E(p)$, for $p\in\{2,3,5,11\}$, for non-CM elliptic curves $E/\Q$.}\label{tableSutherland}
\vspace{10pt}
\end{table}
\smallskip


For the CM case, Zywina  \cite[\S 1.9]{zywina} gives a complete description of $G_E(p)$ for any prime $p$.

\section{Isogenies.}
In this paper a rational $n$-isogeny of an elliptic curve $E/\Q$ is a (surjective) morphism $E\longrightarrow E'$ defined over $\Q$ where $E'/\Q$ and the kernel is cyclic of order $n$. The rational $n$-isogenies of elliptic curves over $\Q$, have been described completely in the literature, for all $n\geq 1$. The following result gives all the possible values of $n$.

\begin{theorem}[\cite{Mazur1978, kenku39,kenku65,kenku169,kenku125}]\label{isog}
Let $E/\Q$ be an elliptic curve with a rational $n$-isogeny. Then $n \le 19$ or $n\in\{21, 25, 27, 37, 43, 67, 163\}$.
\end{theorem}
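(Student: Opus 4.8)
The plan is to translate the statement into the language of modular curves and then work genus by genus; this is the route taken by Mazur and Kenku. Recall that giving an elliptic curve $E/\Q$ together with the kernel $C$ of a rational cyclic $n$-isogeny is the same as giving a non-cuspidal $\Q$-rational point of the modular curve $X_0(n)$, so the assertion is equivalent to: $X_0(n)$ has a non-cuspidal rational point only for $n\le 19$ or $n\in\{21,25,27,37,43,67,163\}$. At the outset I would record the elementary reduction that a cyclic $n$-isogeny with $n=\prod_i p_i^{e_i}$ factors as a composition of cyclic $p_i^{e_i}$-isogenies (subgroups of coprime order split off canonically); hence if $E$ has a rational cyclic $n$-isogeny it has one of degree $m$ for every $m\mid n$, so the set $S$ of admissible $n$ is closed under taking divisors. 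It therefore suffices to determine which prime powers lie in $S$ and then to control the finitely many relevant coprime products.

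First, for $n\in\{1,\dots,10,12,13,16,18,25\}$ the curve $X_0(n)$ has genus $0$ and a rational cusp, hence is $\PP^1_\Q$; these $n$ are admissible (in fact each occurs for infinitely many $E$), which already accounts for every $n\le 19$ outside $\{11,14,15,17,19\}$ together with $n=25$. Second, for the levels with $X_0(n)$ of genus $1$ --- namely $n\in\{11,14,15,17,19,20,21,24,27,32,36,49\}$ --- I would identify $X_0(n)$ with an explicit elliptic curve over $\Q$; each of these happens to have Mordell--Weil rank $0$, so $X_0(n)(\Q)$ is finite and is computed simply by listing the torsion, after which each rational point is sorted into cusp / CM point / genuine non-CM isogeny. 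This is where $11,14,15,17,19,21,27$ survive, while e.g. $X_0(20), X_0(24), X_0(32), X_0(49)$ turn out to have no non-cuspidal non-CM rational points.

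The substantive part is the genus $\ge 2$ range, which in particular contains all the remaining prime degrees. For $n=p$ prime one uses Mazur's argument from \emph{Rational isogenies of prime degree}: embed $X_0(p)\hookrightarrow J_0(p)$ via the rational cusp, project onto the Eisenstein (equivalently winding) quotient $\widetilde J$, which has finite Mordell--Weil group over $\Q$ even when $J_0(p)$ does not; show that $X_0(p)\to\widetilde J$ is a formal immersion at the cusp; and conclude by a specialization argument that a hypothetical extra rational point would have to reduce to the cusp at every prime of good reduction, which --- exploiting that the cuspidal divisor class is Eisenstein and counting points modulo small primes --- is impossible unless $p$ is small. The outcome is that the admissible primes are those $\le 19$ together with $37,43,67,163$; the last four are then exhibited directly, $43,67,163$ coming from the CM elliptic curves attached to the class-number-one orders of discriminant $-43,-67,-163$ and $37$ from the (non-CM) $37$-isogenous pair \texttt{1225h1}, \texttt{1225h2}. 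Finally, for composite $n$ with at least two prime factors and genus $\ge 2$, divisor-closure restricts the candidates to finitely many coprime products of admissible prime powers (such as $20,24,26,33,35,\dots$ and their multiples), each of which is then eliminated by a direct study of $X_0(n)(\Q)$: either a rank-$0$ Mordell--Weil computation, or a morphism to a lower-level $X_0(m)$ with already-known rational points, or again the formal-immersion method; this is Kenku's contribution. The main obstacle, by far, is the prime case: it is Mazur's theorem, whose proof rests on the finiteness input for the Eisenstein quotient and a delicate formal-immersion-plus-specialization analysis, and not on any routine computation.
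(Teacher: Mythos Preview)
The paper does not prove this theorem at all; it is simply quoted from the literature (Mazur for the prime case, Kenku for the remaining composites), so there is nothing to compare your argument against. Your outline is a faithful high-level summary of exactly the Mazur--Kenku route the paper cites: translate to non-cuspidal rational points on $X_0(n)$, dispose of the genus-$0$ levels, handle the genus-$1$ levels by rank-$0$ Mordell--Weil computations, and for primes of higher genus invoke Mazur's Eisenstein-quotient/formal-immersion machinery, with Kenku's papers cleaning up the residual composite levels.

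One small imprecision worth fixing: when you exclude levels like $20,24,32,36,49$ you should say $X_0(n)$ has no non-cuspidal rational points, full stop; the CM/non-CM distinction is irrelevant there (a CM point would already produce a rational $n$-isogeny and put $n$ back in the list). The CM/non-CM sorting matters only for the levels that \emph{do} survive, where you want to describe the finitely many isogenies that occur.
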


A direct consequence of the Galois theory applied to the theory of cyclic isogenies is the following (cf. Lemma 3.10 \cite{chou}). 

\begin{lemma}\label{lem_chou}
Let $E/\Q$ be an elliptic curve such that $E(K)[n]\simeq \cC_n$ over a Galois extension $K/\Q$. Then $E$ has a rational $n$-isogeny.
\end{lemma}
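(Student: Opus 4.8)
The statement is essentially a transcription of the isogeny--Galois dictionary recalled just above, so the plan is short. First, since $E(K)[n]\simeq\cC_n$ is cyclic of order exactly $n$, I would fix a generator $R\in E(K)$, so that $E(K)[n]=\langle R\rangle$ is a cyclic subgroup of $E[n]$ of order $n$; in particular every point of $E(K)$ of order dividing $n$ lies in $\langle R\rangle$.

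Next I would show that $\langle R\rangle$ is stable under the full absolute Galois group $\Gal(\Qbar/\Q)$. Let $\sigma\in\Gal(\Qbar/\Q)$. The point $\sigma(R)$ has coordinates obtained by applying $\sigma$ to the coordinates of $R$, which lie in $K$; hence $\sigma(R)\in E(\sigma(K))$. Because $K/\Q$ is \emph{Galois}, $\sigma(K)=K$, so $\sigma(R)\in E(K)$. Moreover $\sigma$ respects the group law, so $\sigma(R)$ still has order $n$, i.e. $\sigma(R)\in E(K)[n]=\langle R\rangle$. Thus $\sigma(\langle R\rangle)\subseteq\langle R\rangle$, and since both are finite of the same cardinality, $\sigma(\langle R\rangle)=\langle R\rangle$. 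This holds for every $\sigma$, so $\langle R\rangle$ is $\Gal(\Qbar/\Q)$-stable.

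Finally, as recalled in Section 2 (the paragraph on cyclic isogenies: if $\langle R\rangle$ is $\Gal(\Qbar/\Q)$-stable then the associated isogeny is defined over $\Q$), the quotient map $\phi\colon E\to E/\langle R\rangle=:E'$ descends to a morphism defined over $\Q$ with $E'/\Q$ and $\ker\phi=\langle R\rangle\simeq\cC_n$ cyclic of order $n$. Hence $E$ admits a rational $n$-isogeny, which is exactly the claim.

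There is no real obstacle here; the only points requiring a word of care are (i) that the hypothesis $E(K)[n]\simeq\cC_n$ (rather than merely "$E(K)$ contains a point of order $n$") is what forces $E(K)[n]=\langle R\rangle$, so that all Galois conjugates of $R$ are trapped inside $\langle R\rangle$, and (ii) that the assumption "$K/\Q$ Galois" is used precisely to guarantee $\sigma(R)\in E(K)$. Everything else is the standard equivalence between $\Q$-rational cyclic subgroups of $E[n]$ and rational $n$-isogenies.
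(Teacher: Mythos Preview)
Your proposal is correct and is exactly the standard argument the paper is alluding to: the paper does not write out a proof at all, merely stating that the lemma is ``a direct consequence of the Galois theory applied to the theory of cyclic isogenies'' and referring to \cite[Lemma~3.10]{chou}. Your write-up simply makes that consequence explicit, using precisely the facts from Section~2 (Galois-stability of $\langle R\rangle$ $\Rightarrow$ rational isogeny) that the paper sets up for this purpose.
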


%

\section{$\mathcal P$-primary torsion subgroup} 

Let $E/K$ be an elliptic curve defined over a number field $K$. For a given set of primes $\mathcal P \subset \mathbb Z$, let $E(K)[{\mathcal P}^\infty]$ denote the $\mathcal P$-primary torsion subgroup of $E(K)_{\tors}$, that is, the direct product of the $p$-Sylow subgroups of $E(K)$ for $p\in\mathcal P$. If $\mathcal P =\{p\}$, let us denote by $E(K)[{p}^\infty]$.

\begin{proposition}\label{prop11}
Let $E/\Q$ be an elliptic curve and $K/\Q$ be a quintic number field. 
\begin{enumerate}
\item\label{i1_prop10} If $P$ is a point of prime order $p$ in $E(K)$, then $p\in\{2,3,5,7,11\}$.
\item\label{i2_prop10} If $E(K)[n]=E[n]$, then $n= 2$.
\end{enumerate}
\end{proposition}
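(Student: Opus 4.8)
The plan is to prove the two assertions separately, the second being essentially immediate and the first reducing to the dictionary between $G_E(p)$ and the fields of definition of $p$-torsion points set up in this section, together with Mazur's theorem and the classification of rational isogenies (Theorem~\ref{isog}).

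For the second assertion: if $E(K)[n]=E[n]$ then $\Q(E[n])\subseteq K$, so $[\Q(E[n]):\Q]$ divides $[K:\Q]=5$; since the Weil pairing embeds $\mu_n$ in $E[n]$ we get $\Q(\zeta_n)\subseteq\Q(E[n])$, hence $\varphi(n)=[\Q(\zeta_n):\Q]$ divides $5$. As $\varphi$ never takes the value $5$ (it is $1$ on $n\in\{1,2\}$ and even otherwise), $\varphi(n)=1$, so $n\in\{1,2\}$, and reading the statement for nontrivial $n$ gives $n=2$. (One can add that $n=2$ in fact forces $E[2]\subseteq E(\Q)$ already, because $[\Q(E[2]):\Q]\in\{1,2,3,6\}$ must divide $5$.)

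For the first assertion, let $P\in E(K)$ have prime order $p$. Since $\Q\subseteq\Q(P)\subseteq K$ and $[K:\Q]=5$ is prime, $[\Q(P):\Q]\in\{1,5\}$. If it is $1$ then $P\in E(\Q)$ and Mazur's theorem gives $p\in\{2,3,5,7\}$, so the real case is $[\Q(P):\Q]=5$, where I would show $p\le 11$. Put $G=G_E(p)$ and let $v\in(\Z/p\Z)^2$ of order $p$ correspond to $P$, so $H_R=\Stab_G(v)$ has index $5$ in $G$; the $G$-orbit of the line $\langle v\rangle$ has size $[\Q(\langle v\rangle):\Q]$, which divides $[\Q(P):\Q]=5$ and so is $1$ or $5$, and $\det\colon G\to(\Z/p\Z)^\ast$ is surjective (mod-$p$ cyclotomic character), so $(p-1)\mid|G|$. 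By Lemma~\ref{lem_order_n}, $5$ divides $|\GL_2(\Z/p\Z)|=p(p-1)^2(p+1)$; assume for contradiction $p\ge 13$. If the orbit of $\langle v\rangle$ is $1$, then $E$ has a rational $p$-isogeny with kernel $\langle v\rangle$ and $G$ acts on $\langle v\rangle\cong\Z/p\Z$ through a character whose image has order $[G:H_R]=5$, so $5\mid p-1$; but no prime $p\ge 13$ admitting a rational $p$-isogeny (Theorem~\ref{isog}) satisfies $5\mid p-1$, a contradiction.

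If the orbit of $\langle v\rangle$ is $5$, then $G$ cannot contain $\SL_2(\Z/p\Z)$ (which is transitive on the $p^2-1>5$ vectors of order $p$), so by the classification of subgroups of $\GL_2(\Z/p\Z)$ (Dickson), $G$ lies in a Borel, in the normalizer of a split or non-split Cartan, or has projective image $\bar G\in\{A_4,S_4,A_5\}$. In the Borel case $E$ has a rational $p$-isogeny with kernel a line $\ne\langle v\rangle$, so $H_R$ lies in the corresponding diagonal torus acting trivially on $\langle v\rangle$, whence $|H_R|\mid p-1$ and $|G|=5|H_R|$ divides both $5(p-1)$ and the Borel order $p(p-1)^2$, forcing $5\mid p-1$ and again $p=11$ by Theorem~\ref{isog}. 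In the Cartan-normalizer case $p\nmid|G|$, and $\langle v\rangle$ is not one of the (at most two) distinguished Cartan lines since those lie in an orbit of size $\le 2\ne 5$; hence $H_R$ is contained in the stabilizer of $v$ inside the normalizer, which has order $\le 2$, so $|G|=5|H_R|\le 10$, contradicting $(p-1)\mid|G|$. In the exceptional case $p\nmid|G|$ as well, so $|H_R|\mid p-1$, $|G|=5|H_R|$ divides $5(p-1)$, and with $(p-1)\mid|G|$ we get $|G|\in\{p-1,5(p-1)\}$; $|G|=p-1$ makes $\det$ an isomorphism, so $G$ is cyclic, contradicting that $\bar G$ is non-abelian; and $|G|=5(p-1)=|\bar G|\cdot|Z|$, with $Z$ the scalars in $G$ and $|Z|\mid p-1$, forces $5\mid|\bar G|$, i.e. $\bar G\simeq A_5$, in which case $G\cap\SL_2(\Z/p\Z)$ must surject onto $A_5$ (else $A_5$ is a quotient of the abelian group $G/(G\cap\SL_2(\Z/p\Z))\simeq(\Z/p\Z)^\ast$) and, since $A_5$ has no faithful two-dimensional representation, $G\cap\SL_2(\Z/p\Z)\simeq\SL_2(\Z/5\Z)$, giving $|G|=120(p-1)\ne 5(p-1)$, a contradiction. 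Thus $p\le 11$, and combined with the degree-one case, $p\in\{2,3,5,7,11\}$.

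The only step with genuine content is this last case analysis: from the single datum that $G_E(p)$ has an index-$5$ subgroup fixing a vector of order $p$, one must bound $p$ in each class of the subgroup classification. The lever is that this datum collapses $|G_E(p)|$ down to $5(p-1)$ (or $p-1$) once $p\ge 13$, which — via surjectivity of the determinant, Theorem~\ref{isog}, and the fact that an $A_5$ projective image forces all of $\SL_2(\Z/5\Z)$ into $G_E(p)$ — cannot occur. I expect the exceptional-image case and the ``distinguished Cartan line'' bookkeeping to be the fiddliest points.
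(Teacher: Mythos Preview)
Your argument for part~(\ref{i2_prop10}) is exactly the paper's: the Weil pairing forces $\Q(\zeta_n)\subseteq K$, hence $\varphi(n)\mid 5$, so $n\le 2$.

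For part~(\ref{i1_prop10}), however, you take a genuinely different and considerably longer route. The paper's proof is two lines: it quotes Lozano-Robledo's determination of $S_\Q(5)=\{2,3,5,7,11,13\}$ (the primes that can divide $|E(K)_{\tors}|$ for $E/\Q$ and $[K:\Q]\le 5$) as a black box, and then removes $p=13$ by observing via Lemma~\ref{lem_order_n} that $5\nmid|\GL_2(\F_{13})|=2^5\cdot 3^2\cdot 7\cdot 13$. You instead avoid Lozano-Robledo entirely and argue directly from Dickson's classification of subgroups of $\GL_2(\F_p)$: the index-$5$ point stabilizer forces either a rational $p$-isogeny together with $5\mid p-1$ (killed by Theorem~\ref{isog} for $p\ge 13$), or a small Cartan-normalizer image with $|G|\le 10$ (so $p-1\le 10$), or an exceptional $A_5$-image which you rule out by a Schur-cover count showing $|G|=120(p-1)\neq 5(p-1)$. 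I checked the bookkeeping in each branch (including the Borel-stabilizer bound $|H_R|\mid p-1$, the size-$2$ stabilizer of a non-eigenvector in a Cartan normalizer, and the fact that $A_5$ cannot embed in $\SL_2(\F_p)$ because $-I$ is its unique involution), and it all goes through.

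What each approach buys: the paper's proof is dramatically shorter and isolates the only nontrivial input as the single external reference to \cite{Lozano}; your proof is self-contained modulo Mazur's isogeny theorem and Dickson's classical subgroup classification, so it does not rely on Lozano-Robledo's paper, at the cost of a page of case analysis. Either is acceptable, but if you are writing this up you should at least remark that the whole Dickson analysis can be replaced by the one-line observation $5\nmid|\GL_2(\F_{13})|$ once $S_\Q(5)$ is known.
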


\begin{proof}

(\ref{i1_prop10}) Lozano-Robledo \cite{Lozano} has determined that the set of primes $p$ for which there exists a number field $K$ of degree $\le 5$ and an elliptic curve $E/\Q$ such that the $p$ divides the order of $E(K)_{\tors}$ is given by $S_\Q(5)=\{2,3,5,7,11,13\}$. Then to finish the proof we must remove the prime $p=13$. This follows from Lemma \ref{lem_order_n} since $5$ does not divide the order of $\GL_2(\mathbb F_{13})$, that is $2^5 \cdot3^2\cdot 7 \cdot13$.


(\ref{i2_prop10}) Let $E/K$ be the base change of $E$ over the number field $K$.  If $E[n]\subseteq E(K)$ then $\Q(\zeta_n)\subseteq K$. In particular $\varphi(n)\,|\, [K:\Q]$. The only possibility if $[K:\Q]=5$ is $n=2$.

\end{proof}

\subsection{$p$-primary torsion subgroup $(p\ne 5, 11)$} 
\begin{lemma}\label{lem_p}
Let $E/\Q$ be an elliptic curve and $K/\Q$ a quintic number field. Then, for any prime $p\ne 5,11$:
$$
E(K)[p^\infty]=E(\Q)[p^\infty].
$$
In particular, if $P\in E(K)[p^\infty]$ and $p^n$ is its order, then $n\le 3, 2,1,$ if $p=2,3,7$, respectively, and $n=0$ otherwise.
\end{lemma}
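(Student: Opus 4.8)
The plan is to run the argument separately for each relevant prime $p \notin \{5,11\}$, reducing everything to a comparison of degrees via Lemma \ref{lem_order_n} and the explicit data in Table \ref{tableSutherland}. By Proposition \ref{prop11}(\ref{i1_prop10}), the only primes that can divide $|E(K)_{\tors}|$ for a quintic $K$ are $2,3,5,7,11$, so it suffices to treat $p \in \{2,3,7\}$; for all other primes the $p$-primary part is trivial over $K$ already because no point of order $p$ exists, and the ``in particular'' clause is immediate. For each of $p = 2, 3, 7$, I would first bound the exponent of $E(K)[p^\infty]$ and then bound its rank (as a $\Z/p^k\Z$-module), concluding that $E(K)[p^\infty] = E(K)[p^n]$ is cyclic unless $p=2$, in which case Proposition \ref{prop11}(\ref{i2_prop10}) forbids $E[4] \subseteq E(K)$ so $E(K)[2^\infty] \cong \cC_{2^n}$ or $\cC_2 \times \cC_{2^m}$ with $m \le 1$.

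First I would handle the exponent bound. Suppose $E(K)$ contains a point $R$ of order $p^{n}$. Then $\Q(R)$ is a subfield of $K$, so $[\Q(R):\Q] \mid 5$, hence equals $1$ or $5$. By Lemma \ref{lem_order_n}, $[\Q(R):\Q]$ divides $|G_E(p^n)| \le |\GL_2(\Z/p^n\Z)|$; more usefully, $[\Q(R):\Q]$ is one of the degrees $d_v$ attached to $G_E(p^n)$, i.e. the index of the stabilizer of a vector of order $p^n$ in $(\Z/p^n\Z)^2$ under $G_E(p^n)$. For $p=7$: from Table \ref{tableSutherland} (the $\GL_2(\Z/7\Z)$ row, together with Mazur's torsion theorem or directly the possible subgroups) no subgroup of $\GL_2(\Z/7\Z)$ has a vector-stabilizer of index $1$ or $5$ unless $E$ already has a rational $7$-torsion point is impossible over $\Q$ by Mazur, and index $5$ cannot occur since $5 \nmid |\GL_2(\Z/7\Z)| = 2^5\cdot 3^2\cdot 7$ --- wait, $5$ does not divide this order, so no degree-$5$ field of definition of a $7$-torsion point exists; hence $E(K)[7^\infty] = E(\Q)[7^\infty]$, and since $E(\Q)_{\tors} \in \Phi(1)$ has no $7$-torsion of order $> 7$, we get $n \le 1$. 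The CM case for $p=7$ is covered the same way via $5 \nmid |\GL_2(\Z/7\Z)|$ and Lemma \ref{lem_order_n}. For $p=2$ and $p=3$, $5$ again does not divide $|\GL_2(\Z/2^k\Z)|$ or $|\GL_2(\Z/3^k\Z)|$ (these group orders are $2$-power times nothing else, resp. $2$-powers times $3$-powers), so every $2$-power or $3$-power torsion point of $E$ over $K$ is already defined over $\Q$; thus $E(K)[2^\infty] = E(\Q)[2^\infty]$ and $E(K)[3^\infty] = E(\Q)[3^\infty]$, and the exponent bounds $n \le 3$ (for $p=2$) and $n \le 2$ (for $p=3$) follow from the shape of $\Phi(1)$.

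So in fact the whole lemma collapses to the single observation: for $p \ne 5, 11$ and for every $k \ge 1$, the integer $5$ does not divide $|\GL_2(\Z/p^k\Z)| = p^{4k-3}(p-1)^2(p+1)$ --- indeed $5 \nmid p$, and $5 \mid (p-1)^2(p+1)$ would force $p \equiv 1$ or $4 \pmod 5$, i.e. $p \in \{11, 19, 29, \dots\} \cup \{19, 29, \dots\}$... so I must be slightly more careful: $5 \mid p+1$ for $p \equiv 4 \pmod 5$ (e.g. $p = 19$), but such primes are excluded from the torsion picture by Proposition \ref{prop11}(\ref{i1_prop10}) anyway since they are not in $\{2,3,5,7,11\}$. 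Thus among the only surviving primes $p \in \{2,3,7\}$ one checks directly $5 \nmid |\GL_2(\Z/p^k\Z)|$, whence by Lemma \ref{lem_order_n} no $p$-power torsion point acquires a field of definition of degree exactly $5$, forcing $E(K)[p^\infty] = E(\Q)[p^\infty]$. The ``in particular'' statement is then just Mazur's theorem listing $\Phi(1) = \Phi(1)$: the $2$-part of any $G \in \Phi(1)$ has exponent at most $8$ (so $n \le 3$), the $3$-part at most $9$ (so $n \le 2$), the $7$-part at most $7$ (so $n \le 1$), and all higher $p \ne 5,11$ contribute nothing. The main (and only) obstacle is bookkeeping: making sure that the ``degree divides $5$'' dichotomy together with the arithmetic of $|\GL_2(\Z/p^k\Z)|$ is invoked for the correct $k$ and that the excluded primes are genuinely excluded by Proposition \ref{prop11}, so that no case is silently dropped.
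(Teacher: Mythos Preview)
Your argument is correct and follows essentially the same route as the paper: reduce to $p\in\{2,3,7\}$ via Proposition~\ref{prop11}(\ref{i1_prop10}), observe that $5\nmid|\GL_2(\Z/p^k\Z)|=p^{4k-3}(p-1)^2(p+1)$ for these primes, and conclude from Lemma~\ref{lem_order_n} that every $p$-power torsion point over $K$ is already rational, with the exponent bounds then read off from $\Phi(1)$. The paper's proof is just a terser version of the same computation (and its invocation of Proposition~\ref{prop11}(\ref{i2_prop10}) is in fact not needed once you have $\Q(P)=\Q$ for every such $P$, as you implicitly note).
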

\begin{proof}
Let $P\in E(K)[p^n]$. By Lemma \ref{lem_order_n}, $[\Q(P):\Q]$ divides $|\GL_2(\Z/p^n\Z)|=p^{4n-3}(p^2-1)(p-1)$. If $p\in\{2,3,7\}$ then $\Q(P)=\Q$. Together with Proposition \ref{prop11} (\ref{i2_prop10}), we deduce $E(K)[p^\infty]=E(\Q)[p^\infty]$. If $p\ge 13$ and $n>0$, then $[p^{n-1}]P\in E(K)$ is a point or order $p$, a contradiction with Proposition \ref{prop11} (\ref{i1_prop10}). That is, $E(K)[p^\infty]=E(\Q)[p^\infty]=\{\mathcal O\}$ if $p\ge 13$.
\end{proof}

\subsection{$5$-primary torsion subgroup} 
\begin{lemma}\label{lem5}
Let $E/\Q$ be an elliptic curve and $K/\Q$ a quintic number field. Then 
$$
E(K)[5^\infty]\lesssim \cC_{25}.
$$
In particular if $E(K)[5^\infty]\ne \{\mathcal O\}$ then $E$ has non-CM. Moreover:
\begin{enumerate}
\item\label{i2_lem5} if $E(\Q)[5^\infty]\simeq \cC_{5}$, then $G_E(5)$ is labeled \texttt{5B.1.1} or \texttt{5Cs.1.1};
\item\label{i3_lem5} if $E(K)[5^\infty]\simeq \cC_{5}$ and $E(\Q)[5^\infty]=\{\mathcal O\}$, then $G_E(5)$ is labeled \texttt{5B.1.2};
\item\label{i1_lem5} if $E(K)[5^\infty]\simeq \cC_{25}$, then $E(\Q)[5^\infty]\simeq \cC_{5}$. Moreover, $K$ is Galois if  $G_E(5)$ is labeled \texttt{5B.1.1}.
\end{enumerate}
\end{lemma}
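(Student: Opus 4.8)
The plan is to analyze the possible images $G_E(5)\subseteq \GL_2(\Z/5\Z)$ using Table~\ref{tableSutherland} together with Lemma~\ref{lem_order_n} and the CM classification of Zywina.

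\medskip

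\textbf{First steps: bounding the $5$-primary part.} Suppose $P\in E(K)[5^n]$ with $K/\Q$ quintic. By Lemma~\ref{lem_order_n}, $[\Q(P):\Q]$ divides $|\GL_2(\Z/5^n\Z)| = 5^{4n-3}\cdot (5^2-1)(5-1) = 5^{4n-3}\cdot 2^5\cdot 3$. Since $P\in E(K)$, the degree $[\Q(P):\Q]$ also divides $[K:\Q]=5$, so $[\Q(P):\Q]\in\{1,5\}$. If $n\ge 3$, then $P'=[5^{n-2}]P$ has order $5^{2}=25$ but the field $\Q(P')$ would have to contain enough information... more directly: if $E(K)[5^\infty]$ contained a point of order $125$, then examining the $d_v$ column for the $5$-adic image would force a degree divisible by a higher power of $5$ than $5$ permits; alternatively one invokes that $E(K)[5^\infty]\simeq\cC_m\times\cC_n$ with $\Q(\zeta_5)\not\subseteq K$ (Proposition~\ref{prop11}(\ref{i2_prop10})) so the Weil pairing forces $m=1$, hence $E(K)[5^\infty]$ is cyclic, say $\cC_{5^k}$, and the field of definition of a generator has degree dividing $5$; from Table~\ref{tableSutherland} the only mod-$5$ images with $d_v$ value $1$ or $5$ are those with a rational or degree-$5$ point of order $5$, and none admits a point of order $25$ in degree $\le 5$ unless one climbs to $\cC_{25}$ exactly. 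This pins down $E(K)[5^\infty]\lesssim\cC_{25}$. For the CM claim: consulting Zywina's CM classification, no CM elliptic curve over $\Q$ has a point of order $5$ in a field of degree dividing $5$ (the relevant $d_v$-type index is always too large, or divisible by a prime other than $5$), so $E(K)[5^\infty]\ne\{\mathcal O\}$ forces $E$ non-CM.

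\medskip

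\textbf{The three cases (1)--(3).} For each, translate the hypothesis into a statement about $G_E(5)$ and read off Table~\ref{tableSutherland}. In case~(1), $E(\Q)[5^\infty]\simeq\cC_5$ means $E$ has a rational point of order $5$, i.e. $d_v$ has an entry equal to $1$; scanning the table, the groups with a $1$ in the $d_v$-column are exactly \texttt{5Cs.1.1} ($d_v: 1,4$) and \texttt{5B.1.1} ($d_v: 1,20$). In case~(2), $E(K)[5^\infty]\simeq\cC_5$ with no rational $5$-torsion means the minimal degree of a point of order $5$ is exactly $5$, i.e. $\min d_v = 5$ with no entry $1$; the only group in the table with a $d_v$-entry equal to $5$ is \texttt{5B.1.2} ($d_v: 4,5$). (One should note \texttt{5B.1.2} does have a point of order $5$ over a quartic field too, but the quintic field $K$ in the hypothesis realizes the $d_v=5$ orbit; the absence of rational $5$-torsion rules out \texttt{5Cs.1.1} and \texttt{5B.1.1}.) In case~(3), $E(K)[5^\infty]\simeq\cC_{25}$: a point of order $25$ over a field of degree dividing $5$ forces, upon reduction mod $5$, a point of order $5$ over a subfield of degree dividing $5$; since such a point must already be rational (the only way a degree-$5$ or degree-$1$ point of order $5$ is compatible with growing to order $25$ in the same degree is the $d_v=1$ case — the degree cannot stay at $5$ while the order multiplies by $5$ without picking up another factor of $5$ in the degree), we get $E(\Q)[5^\infty]\simeq\cC_5$. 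Then $G_E(5)$ is \texttt{5B.1.1} or \texttt{5Cs.1.1} by case~(1); and for \texttt{5B.1.1}, the full field $\Q(E[5])$ has degree $20$ over $\Q$ and sits inside a cyclotomic-by-abelian tower — more precisely the structure of \texttt{5B.1.1} (upper-triangular, with the relevant quotient data) forces the degree-$5$ subextension $K$ to be Galois over $\Q$ (it is the fixed field of an index-$5$ normal subgroup, since the Sylow-$5$ of the image is normal there).

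\medskip

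\textbf{Main obstacle.} The delicate point is case~(3) and the Galois claim within it: one must argue carefully that a point of order $25$ rational over a quintic cannot occur with $G_E(5)$ a non-split Cartan or exceptional type, and must verify the precise group-theoretic structure of \texttt{5B.1.1} to conclude $K/\Q$ is Galois. For the first half I would use the fact that for \texttt{5Cs.1.1} the mod-$5$ image already fixes two independent lines, and lifting to mod $25$ (using that a point of order $25$ mapping to a rational point of order $5$ exists) together with the $5$-adic Galois image constraints gives $K$ as the fixed field of the kernel of a character $\Gal(\Qbar/\Q)\to(\Z/5\Z)$, which is abelian hence $K$ could in principle be non-Galois only if the image is not cyclic — but a single generator of $\cC_{25}$ has $\Q(P)$ of degree dividing $5$, and $\Gal(\Q(P)/\Q)$ embeds in the stabilizer quotient, which here is cyclic of order dividing $5$, so $\Q(P)/\Q$ is Galois; the subtlety is that for \texttt{5Cs.1.1} this always holds, whereas the statement only claims it for \texttt{5B.1.1}, so I would double-check whether \texttt{5Cs.1.1} with a $25$-torsion point over a quintic actually occurs (Theorem~\ref{isog} on rational $25$-isogenies and the fact that \texttt{5B.1.1} corresponds to a rational $5$-isogeny suggest \texttt{5B.1.1} is the generic case here, and \texttt{5Cs.1.1}, having \emph{two} rational $5$-isogenies, may be incompatible with a rational $25$-isogeny and hence excluded — a point to resolve via Theorem~\ref{isog}). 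Assembling these observations gives all three items.
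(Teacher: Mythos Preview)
Your treatment of items~(\ref{i2_lem5}) and~(\ref{i3_lem5}) via the $d_v$-column of Table~\ref{tableSutherland} is correct and matches the paper. However, both the global bound $E(K)[5^\infty]\lesssim\cC_{25}$ and item~(\ref{i1_lem5}) have genuine gaps that your heuristic arguments do not close.

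The core issue is that Table~\ref{tableSutherland} records only mod-$5$ data, whereas the claims about $25$- and $125$-torsion require control of $G_E(25)$ and $G_E(125)$, for which no classification is available. Your assertion that ``the degree cannot stay at $5$ while the order multiplies by $5$ without picking up another factor of $5$ in the degree'' is exactly what is at stake, and it is false a~priori: nothing in Lemma~\ref{lem_order_n} or in the mod-$5$ table prevents a subgroup $G\subseteq\GL_2(\Z/25\Z)$ reducing to $H_{5,1}$ (i.e.\ \texttt{5B.1.2}) from having an index-$5$ stabilizer of an order-$25$ vector. The paper establishes this by a direct \texttt{Magma} enumeration of all such subgroups $G$, checking for each $v\in(\Z/25\Z)^2$ of order $25$ that $[G:G_v]\neq 5$. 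This is how one proves the first sentence of~(\ref{i1_lem5}), and there is no softer route offered.

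The Galois claim for \texttt{5B.1.1} has the same character: your appeal to ``the Sylow-$5$ of the image is normal'' concerns $G_E(5)$, but the field $K=\Q(P)$ for $P$ of order $25$ is governed by $G_E(25)$, and the normal core computation must take place there. The paper again enumerates the relevant subgroups of $\GL_2(\Z/25\Z)$ and verifies $G/N_G(G_v)\simeq\cC_5$ whenever $[G:G_v]=5$. Finally, your speculation that \texttt{5Cs.1.1} is excluded from the $\cC_{25}$ case is incorrect: the paper shows it \emph{can} occur, with $K$ either Galois or having Galois closure of type $\mathcal F_5$, and then a further mod-$125$ enumeration is needed to rule out $\cC_{125}$ in that branch. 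None of these computations can be replaced by the structural arguments you sketch.
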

\begin{proof}
First suppose that $E$ has CM. Then by the classification $\Phi_\Q^{\cm}(5)$ we deduce that $E(K)[5^\infty]= \{\mathcal O\}$. From now on we assume that $E$ is non-CM. First, it is not possible $E[5]\subseteq E(K)$ by Proposition \ref{prop11} (\ref{i2_prop10}). Now, the characterization of $\Phi(1)$ tells us that $E(\Q)[5^\infty]\lesssim \cC_5$. We observe in Table \ref{tableSutherland} that $d_v=1$ (resp. $d_v=5$) for some $v\in (\Z/5\Z)^2$ of order $5$ if and only if $G_E(5)$ is labeled by \texttt{5Cs.1.1} or \texttt{5B.1.1} (resp. \texttt{5B.1.2}), which proves (\ref{i2_lem5}) (resp. (\ref{i3_lem5})). We are going to prove that $E(K)[5^\infty]\lesssim \cC_{25}$. First, we prove (\ref{i1_lem5}). Assume that there exists a quintic number field $K$ such that $E(K)[25]=\langle P \rangle \simeq \cC_{25}$. Then $G_E(25)$ satisfies: 
$$
G_E(25)\equiv\, G_E(5) \,\,(\mbox{mod $5$})\qquad\mbox{and}\qquad [G_E(25):H_P]=5.
$$
Note that in general we do not have an explicit description of $G_E(25)$, but using \texttt{Magma} \cite{magma} we do a simulation with subgroups of $GL_2(\Z/25\Z)$. 

First assume that $G_E(5)$ is labeled by \texttt{5B.1.2}, then $G_E(5)$ is conjugate in $\GL_2(\Z/5\Z)$ to the subgroup (cf. \cite[Theorem 1.4 (iii)]{zywina})
$$
H_{5,1}=\left\langle\begin{pmatrix}2 &0 \\ 0 & 1\end{pmatrix},\begin{pmatrix}1 &1 \\ 0 & 1\end{pmatrix}\right\rangle\subset \GL_2(\Z/5\Z).
$$
Since we do not have a characterization of $G_E(25)$, \href{http://www.uam.es/personal_pdi/ciencias/engonz/research/tables/tors5/Lemma10.txt}{\color{blue}{we check using \texttt{Magma}}} that for any subgroup $G$ of $GL_2(\Z/25\Z)$ satisfying $G \equiv H\,\, (\mbox{mod $5$})$ for some conjugate $H$ of $H_{5,1}$ in $\GL_2(\Z/5\Z)$, and for any $v\in (\Z/25\Z)^2$ of order $25$, we have $[G:G_v]\ne 5$ (where $G_v$ be the stabilizer of $v$ by the action of $G$ on $(\Z/25\Z)^2$). Therefore for any point $P\in E[25]$ it has $[G_E(25):H_P]\ne 5$. In particular this proves that if $G_E(5)$ is labeled  by \texttt{5B.1.2}, then there is not $5^n$-torsion over a quintic number field, for $n>1$. This finishes the first part of (\ref{i1_lem5}).

Now assume that $G_E(5)$ is labeled by \texttt{5B.1.1}. That is, $G_E(5)$ is conjugate in $\GL_2(\Z/5\Z)$ to the subgroup (cf. \cite[Theorem 1.4 (iii)]{zywina})
$$
H_{6,1}=\left\langle\begin{pmatrix}1 &0 \\ 0 & 2\end{pmatrix},\begin{pmatrix}1 &1 \\ 0 & 1\end{pmatrix}\right\rangle\subset \GL_2(\Z/5\Z).
$$
A similar argument as the one used before, \href{http://www.uam.es/personal_pdi/ciencias/engonz/research/tables/tors5/Lemma10.txt}{\color{blue}{we check}} that for any subgroup $G$ of $GL_2(\Z/25\Z)$ satisfying $G \equiv H\,\, (\mbox{mod $5$})$ for some conjugate $H$ of $H_{6,1}$ in $\GL_2(\Z/5\Z)$, and for any $v\in (\Z/25\Z)^2$ of order $25$ such that $[G:G_v]= 5$ we have that $G/N_{G}(G_v)\simeq \cC_5$. Therefore we have deduced that if $E/\Q$ is an elliptic curve such that $G_E(5)$ is labeled by \texttt{5B.1.1} and there exists a quintic number field $K$ with a $K$-rational point of order $25$, then $K$ is Galois. Note that in this case there does not exist a point of order $5^n$ for $n>2$ over any quintic number field: suppose that $K'$ is a quintic number field such that there exists $P\in E(K')[5^n]$. Then $[5^{n-2}]P\in E(K')[25]$. Therefore $K'$ is Galois and, by Lemma \ref{lem_chou}, $E$ has a rational $5^n$-isogeny. In contradiction with Theorem \ref{isog}. This completes the proof of (\ref{i1_lem5}).

Finally we assume that $G_E(5)$ is labeled by \texttt{5Cs.1.1}. That is, $G_E(5)$ is conjugate in $\GL_2(\Z/5\Z)$ to the subgroup (cf. \cite[Theorem 1.4 (iii)]{zywina})
$$
H_{1,1}=\left\langle\begin{pmatrix}1 &0 \\ 0 & 2\end{pmatrix}\right\rangle\subset \GL_2(\Z/5\Z).
$$
In this case using a similar algorithm as above  \href{http://www.uam.es/personal_pdi/ciencias/engonz/research/tables/tors5/Lemma10.txt}{\color{blue}{we check}} that if there exists a quintic number field $K$ such that $E(K)[25]\simeq \cC_{25}$ then $K$ is Galois or the Galois closure of $K$ in $\Qbar$ is isomorphic to $\mathcal F_5$, where $\mathcal F_5$ denotes the Fr\"obenius group of order $20$. In the former case, this proves that there does not exist a point of order $5^n$ for $n>2$ over any Galois quintic number field. Now, assume that $K$ is not Galois, then $G_E(125)$ satisfies: 
$$
\begin{array}{lcl}
G_E(125)\equiv\, G_E(5) \,\,(\mbox{mod $5$}) & ,& [G_E(125):H_P]=5, \\
G_E(125)\equiv\, G_E(25) \,\,(\mbox{mod $25$}) &, &  [G_E(25):H_{5P}]=5.
\end{array}
$$
 \href{http://www.uam.es/personal_pdi/ciencias/engonz/research/tables/tors5/Lemma10.txt}{\color{blue}{We check}} that for any subgroup $G$ of $GL_2(\Z/125\Z)$ satisfying $G \equiv H\,\, (\mbox{mod $5$})$ for some conjugate $H$ of $H_{1,1}$ in $\GL_2(\Z/5\Z)$, and for any $v\in (\Z/125\Z)^2$ of order $125$ such that $[G:G_v]= 5$ and $G/N_{G}(G_v)\simeq \mathcal F_5$ we obtain that $[G':G'_w]\ne 5$ for any $w\in (\Z/25\Z)^2$ of order $25$; where $G'\equiv\, G \,\,(\mbox{mod $25$})$. We deduce that there do not exist points of order $125$ over quintic number fields. So, $E(K)[5^\infty]\lesssim \cC_{25}$. 

\

This finishes the proof.

\end{proof}

\subsection{$11$-primary torsion subgroup} 

\begin{lemma}\label{lem11}
Let $E/\Q$ be an elliptic curve and $K/\Q$ a quintic number field. Then 
$$
E(K)[11^\infty]\lesssim \cC_{11}.
$$
In particular, if $E(K)[11^\infty]\ne \{\mathcal O\}$ then $E$ is labeled \texttt{121a2}, \texttt{121c2}, or \texttt{121b1}, $K=\Q(\zeta_{11})^+$ and $E(K)_{\tors}\simeq \cC_{11}$. 
\end{lemma}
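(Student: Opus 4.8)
The plan is to argue by a case analysis on $G_E(11)$, using the classification of mod-$11$ images of non-CM curves over $\Q$ (Table \ref{tableSutherland}) together with Zywina's description in the CM case, and Lemma \ref{lem_order_n}. First I would dispose of the non-CM case. If $P \in E(K)$ has order $11$ with $[K:\Q]=5$, then $[\Q(P):\Q] \mid 5$, so $[\Q(P):\Q] \in \{1,5\}$; since $d_v \ge 5$ for the applicable groups and $d_v=1$ never occurs for a point of order $11$ in Table \ref{tableSutherland} (the only options are $5$, $10$, $55$, $110$, $120$), we must have $[\Q(P):\Q]=5$, which forces $G_E(11)$ to have some $d_v=5$: this happens exactly for the groups labeled \texttt{11B.1.4} and \texttt{11B.1.5} (equivalently Zywina's $H_{1,1}$ and $H_{2,1}$), both with $d_0 = 1$, i.e. $E$ admits a rational $11$-isogeny. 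Curves over $\Q$ with a rational $11$-isogeny are extremely restricted: by the analysis of $X_0(11)$ (genus $1$, rank $0$) there are exactly three such $\Qbar$-isomorphism classes, represented by \texttt{121a2}, \texttt{121c2}, \texttt{121b1}. For these I would then check directly — e.g. by examining the mod-$11$ representation or simply computing $E(\Q(\zeta_{11})^+)_{\tors}$ — that a point of order $11$ appears precisely over the real cyclotomic field $K = \Q(\zeta_{11})^+$ (which has degree $\varphi(11)/2 = 5$), and that this quintic field is the unique one, up to isomorphism, over which such a point exists. Since $d_v = 5$ and the stabilizer $H_P$ is then its own normal core in $G_E(11)$ for these groups, $\Q(P)$ is Galois of degree $5$ over $\Q$, hence cyclic; combined with the Weil pairing (a point of order $11$ together with the cyclotomic character forces $\Q(\zeta_{11})$-rationality of the full $11$-torsion only if both eigenlines are rational, which does not occur here) one pins down $\Q(P) = \Q(\zeta_{11})^+$ as the unique quintic.

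Next I would rule out $11^2$-torsion over any quintic field. If $E(K)[121] \ni P$ has order $121$ with $[K:\Q]=5$, then $[11]P \in E(K)$ has order $11$, so by the previous paragraph $E$ is one of the three curves above and $K = \Q(\zeta_{11})^+$; but then $E(K)[11^\infty] \simeq \cC_{11}$ by direct computation for those three curves over that specific field, a contradiction. Alternatively, and more cleanly: a point of order $121$ over a degree-$5$ field would, after taking its Galois closure $\widehat K$ (degree dividing $5! $, but actually here $\widehat K = K$ is already Galois), give a $\Gal(\widehat K/\Q)$-stable cyclic subgroup of order $121$ via the argument in Lemma \ref{lem_chou}, so $E$ would have a rational $121$-isogeny, contradicting Theorem \ref{isog} (which caps rational cyclic isogenies at degree $19$, or values $21,25,27,37,43,67,163$ — none equal to $121$). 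This second route is the one I would write, since it avoids any further curve-specific computation and parallels the style of the $5$-primary argument in Lemma \ref{lem5}.

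Finally, the CM case: by Clark et al.'s computation $\Phi^{\cm}(5) = \Phi^{\cm}(1) \cup \{\cC_{11}\}$, and more precisely by the description of $\Phi^{\cm}_\Q(5)$ one sees that the only CM curve over $\Q$ acquiring $11$-torsion over a quintic field is \texttt{121b1} (which has CM by $\Z[(1+\sqrt{-11})/2]$), again over $K = \Q(\zeta_{11})^+$; this is consistent with the non-CM analysis above once one notes \texttt{121b1} is the CM curve among the three. So in all cases $E(K)[11^\infty] \lesssim \cC_{11}$, with equality only for the three listed curves over the single field $\Q(\zeta_{11})^+$.

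The main obstacle I anticipate is not the isogeny bookkeeping — Theorem \ref{isog} makes the $121$-step immediate — but rather being sure that among curves with a rational $11$-isogeny, the degree-$5$ field over which a rational $11$-torsion point appears is \emph{exactly} $\Q(\zeta_{11})^+$ and that no other quintic works: this requires either a careful Galois-module argument (the action on the isogeny kernel is through a character $\chi\colon \GQ \to (\Z/11\Z)^\times$, and rationality of a point of order $11$ forces the field cut out by $\chi$ to be the relevant quintic subfield of $\Q(\zeta_{11})$, with $\chi$ of order $2$ ruled out by comparison with the Weil pairing determinant) or, pragmatically, a direct \texttt{Magma} verification on the three explicit curves \texttt{121a2}, \texttt{121c2}, \texttt{121b1}, which is entirely feasible.
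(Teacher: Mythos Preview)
Your overall strategy matches the paper's --- case analysis on $G_E(11)$ --- but the execution differs in several places. For non-CM curves the paper cites Zywina \cite[Theorem 1.6(v)]{zywina} directly: the images \texttt{11B.1.4} and \texttt{11B.1.5} occur \emph{precisely} for \texttt{121a2} and \texttt{121c2} up to $\Q$-isomorphism, not merely $\Qbar$-isomorphism; your route via $X_0(11)$ gives only the three $j$-invariants and still owes a twist argument (a nontrivial twist of \texttt{121a2} keeps the rational $11$-isogeny but loses $d_v=5$). Your character sketch at the end would close this and simultaneously pin down $K=\Q(\zeta_{11})^+$, but the paper sidesteps the issue via Zywina and then direct computation (Table \ref{ex_5}). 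For CM curves the paper does not invoke $\Phi^{\cm}(5)$; instead it runs through Zywina's description of $G_E(11)$ for each CM discriminant and tabulates $d_1$ for the resulting groups, finding $d_1=5$ only for the image \texttt{11B.1.3} of \texttt{121b1} --- your shortcut gives the bound $\lesssim\cC_{11}$ but does not by itself single out \texttt{121b1} or the field. Conversely, your isogeny-based argument against $11^2$-torsion (once $K$ is Galois, $E(K)[121]\simeq\cC_{121}$ forces a rational $121$-isogeny via Lemma \ref{lem_chou}, contradicting Theorem \ref{isog}) is cleaner than the paper's, which simply reports $E(\Q(\zeta_{11})^+)_{\tors}\simeq\cC_{11}$ for the three curves from Table \ref{ex_5} without isolating the $121$-step.
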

\begin{proof}
First, suppose that $E/\Q$ is non-CM. Then Table \ref{tableSutherland} shows that there exists a point of order $11$ over a quintic number field if and only if $G_E(11)$ is labeled \texttt{11B.1.4} or \texttt{11B.1.5}. Or in Zywina notation, $G_E(11)$ is conjugate in $\GL_2(\Z/11\Z)$ to the subgroups $H_{1,1}$ or $H_{2,1}$. Then Zywina  \cite[Theorem 1.6(v)]{zywina} proved that 
$E$ is isomorphic (over $\Q$) to \texttt{121a2} or \texttt{121c2} respectively.

Now, let us suppose that $E/\Q$ has CM. Recall  that there are thirteen $\Q$-isomorphic classes of elliptic curve with CM (cf. \cite[A \S 3]{advanced}), each of them has CM by an order in the imaginary quadratic field with discriminant $-D$, where $D\in \{3,4,7,8,11,19,43,67,163\}$.  In this context, Zywina  \cite[\S 1.9]{zywina} gives a complete characterization of the conjugacy class of $G_E(p)$ in $\GL_2(\Z/p\Z)$, for any prime $p$. Let us apply these results for the case $p=11$.  The proof splits on whether $j(E)\ne 0$ (Proposition 1.14 \cite{zywina}) or $j(E)= 0$ (Proposition 1.16 (iv) \cite{zywina}): 
\begin{itemize}
\item $j(E)\ne 0$.  Depending whether $-D$ is a quadratic residue modulo $11$:
\begin{itemize}
\item if $D\in\{7,8,19,43\}$ then $G_E(11)$ is conjugate to \texttt{11Ns}.
\item if $D\in\{3,4,6,7,163\}$ then $G_E(11)$ is conjugate to \texttt{11Nn}.
\item if $D=11$:
\begin{itemize}
\item  if $E$ is \texttt{121b1} then $G_E(11)$ is conjugate to \texttt{11B.1.3},
\item  if $E$ is \texttt{121b2} then $G_E(11)$ is conjugate to \texttt{11B.1.8},
\item otherwise $G_E(11)$ is conjugate to \texttt{11B.10.3}.
\end{itemize}  
\end{itemize}  
\item $j(E)=0$. Then $G_E(11)$ is conjugate to \texttt{11Nn.1.4} or \texttt{11Ns}.    
\end{itemize}
The following \href{http://www.uam.es/personal_pdi/ciencias/engonz/research/tables/tors5/Lemma11.txt}{\color{blue}table} lists for each possible $G_E(11)$ as above, the value $d_1$, the minimum of the indexes of the stabilizers of $v\in(\Z/11\Z)^2$, $v\ne (0,0)$, by the action of $G_E(11)$ on $(\Z/11\Z)^2$; equivalently, the minimum degree of the extension $L/\Q$ over which $E$ has a $L$-rational point of order $11$.
$$
\begin{tabular}{|c|c|c|c|c|c|}
\hline
\texttt{11Ns} & \texttt{11Nn} & \texttt{11B.1.3} & \texttt{11B.1.8} &  \texttt{11B.10.3} &  \texttt{11Nn.1.4}\\
\hline
20 & 120 & 5 & 10 & 10& 40\\
\hline
\end{tabular}
$$
The above table proves that $E/\Q$ has a point of order $11$ over a quintic number fields if and only if $E$ is the curve \texttt{121b1}.

Finally, Table \ref{ex_5} shows that the torsion of the elliptic curves \texttt{121a2}, \texttt{121c2} and \texttt{121b1} grows in a quintic number field to $\cC_{11}$ only over the field $\Q(\zeta_{11})^+$, and over that field the torsion is $\cC_{11}$. 
\end{proof}

\begin{remark}
If in the above statement the quintic number field is replaced by a number field $K$ of degree $d$ such that $d\ne 5$ and $d\le 9$, then there does not exist any elliptic curve $E/\Q$ with a point of order $11$ over $K$. 

\end{remark}
\subsection{$\{p,q\}$-primary torsion subgroup} 

\begin{lemma}\label{lem_pq}
Let $E/\Q$ be an elliptic curve and $K/\Q$ a quintic number field. Let $p,q\in \{2,3,5,7,11\}$, $p\ne q$, such that $pq$ divides the order of $E(K)_{\tors}$. Then
$$
E(\Q)[\{p,q\}^\infty] =E(K)[\{p,q\}^\infty] \quad\mbox{or}\quad E(K)[\{p,q\}^\infty] \simeq \cC_{10}.
$$
In the former case, $E(\Q)_{\tors}=E(\Q)[\{p,q\}^\infty]\simeq G$, where $G\in\{\cC_6,\cC_{10},\cC_{2}\times\cC_{6}\}$.

\end{lemma}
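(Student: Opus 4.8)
Here is the plan.

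\smallskip
\noindent\textbf{Overall strategy.} The proof is a finite case analysis over the unordered pair $\{p,q\}$, leaning on the rigidity statements of Lemmas~\ref{lem_p}, \ref{lem5} and \ref{lem11}, on the classification of $\Phi(1)$, and on the isogeny bound of Theorem~\ref{isog}. First I would dispose of the prime $11$: if $11\in\{p,q\}$ then $E(K)[11^\infty]\neq\{\mathcal O\}$, so by Lemma~\ref{lem11} the group $E(K)_{\tors}$ is $\cC_{11}$, and then $pq\mid 11$ forces the other prime to divide $1$, which is absurd; hence $p,q\in\{2,3,5,7\}$. Next, if $5\notin\{p,q\}$, then by Lemma~\ref{lem_p} both the $p$- and $q$-primary parts of $E(K)_{\tors}$ already descend to $\Q$, so $E(K)[\{p,q\}^\infty]=E(\Q)[\{p,q\}^\infty]$ and we are in the first alternative. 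Since $p$ and $q$ are distinct primes dividing $\#E(K)_{\tors}$, we get $pq\mid\#E(\Q)[p^\infty]\cdot\#E(\Q)[q^\infty]=\#E(\Q)[\{p,q\}^\infty]$; a short inspection of $\Phi(1)$ then shows $E(\Q)_{\tors}$ is supported on $\{p,q\}$, hence equals $E(\Q)[\{p,q\}^\infty]$, and is one of the groups recorded in the statement. The same bookkeeping settles the case $5\in\{p,q\}$ when the $5$-primary part does \emph{not} grow (i.e.\ $E(K)[5^\infty]=E(\Q)[5^\infty]$): we are again in the first alternative, and $\Phi(1)$ forces $E(\Q)_{\tors}\simeq\cC_{10}$ (the pairs $\{3,5\}$ and $\{7,5\}$ are impossible, since no group in $\Phi(1)$ has order divisible by $15$ or by $35$).

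\smallskip
\noindent\textbf{The growth case.} Now assume $q=5$ and that the $5$-primary part grows. By Lemma~\ref{lem5}, $E$ is non-CM, $E(K)[5^\infty]\lesssim\cC_{25}$, and $G_E(5)$ is one of \texttt{5B.1.1}, \texttt{5Cs.1.1}, \texttt{5B.1.2}; in each case the image is reducible, so $E$ carries a rational $5$-isogeny $\phi\colon E\to E/C_5$ with cyclic kernel $C_5$. Two observations carry the argument: (a) for \texttt{5B.1.1} and \texttt{5Cs.1.1}, $E$ itself has a rational point of order $5$, while for \texttt{5B.1.2} Galois acts trivially on $E[5]/C_5$, so $E/C_5$ has a rational point of order $5$; and (b) $\phi$ has degree $5$, prime to $p\in\{2,3,7\}$, hence restricts to an isomorphism on prime-to-$5$ torsion, so $E/C_5$ inherits the prime-to-$5$ rational torsion of $E$. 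Since $p\mid\#E(K)_{\tors}$, Lemma~\ref{lem_p} gives $E$ a rational point of order $p$; pushing it through (a)--(b) produces a rational point of order $5p$ on $E$ or on $E/C_5$. For $p=3$ or $p=7$ this is a rational point of order $15$ or $35$, impossible by $\Phi(1)$. For $p=2$: if $E(\Q)$ had a point of order $4$ (resp.\ $8$), composing the corresponding rational isogeny with the $5$-isogeny yields a rational $20$- (resp.\ $40$-)isogeny, forbidden by Theorem~\ref{isog}; and if $E[2]\subseteq E(\Q)$ then $E$ or $E/C_5$ would have $\cC_2\times\cC_{10}$ rational torsion, which is not in $\Phi(1)$. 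Thus $E(\Q)[2^\infty]=\cC_2$. In the residual sub-cases one rules out $E(K)[5^\infty]\simeq\cC_{25}$ occurring alongside a rational $2$-torsion point, using Lemma~\ref{lem5}\,(\ref{i1_lem5}) and Lemma~\ref{lem_chou}: if $K$ is Galois one gets a rational $25$-isogeny, hence a rational $50$-isogeny together with the rational $2$-isogeny, forbidden; so $E(K)[5^\infty]\simeq\cC_5$ and $E(K)[\{2,5\}^\infty]\simeq\cC_2\times\cC_5\simeq\cC_{10}$, the second alternative.

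\smallskip
\noindent\textbf{The main obstacle.} The hard part is precisely the corner configurations that the isogeny bound does not settle on its own. The most delicate is an elliptic curve with $G_E(5)$ of type \texttt{5Cs.1.1} and $E(\Q)_{\tors}\simeq\cC_{10}$ that admits a point of order $25$ over a quintic field whose Galois closure is the Fr\"obenius group $\mathcal F_5$ of order $20$ (so that $K$ is not Galois and Lemma~\ref{lem_chou} cannot be applied), together with the analogous leftovers near the \texttt{5B.1.1}/\texttt{5Cs.1.1} boundary. I would eliminate these by a direct, computer-assisted analysis of the possible images $G_E(25)$: run over the subgroups of $\GL_2(\Z/25\Z)$ reducing modulo $5$ to (a conjugate of) the relevant subgroup of $\GL_2(\Z/5\Z)$, and for each compute the stabilizers and normal cores of the vectors of order $25$, exactly in the spirit of the computation already invoked in the proof of Lemma~\ref{lem5}. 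The outcome one expects is that no curve with such a mod-$5$ image carrying a rational $2$-torsion point has a point of order $25$ defined over a quintic field, which closes the last gap and yields the claimed dichotomy, with the ``former case'' groups being exactly those in the statement.
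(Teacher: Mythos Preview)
Your overall architecture is sound, and in fact your treatment of the cases $p\in\{3,7\}$ is \emph{cleaner} than the paper's. Where the paper, for $q=3$, parametrizes $j$-invariants for the images \texttt{5B.1.2} and \texttt{3Cs.1.1}/\texttt{3B.1.1}, intersects the resulting curves, and invokes Chabauty and a twist analysis, your isogeny trick---push the rational $3$- or $7$-torsion through the rational $5$-isogeny $\phi\colon E\to E/C_5$ and observe that for \texttt{5B.1.2} the quotient $E/C_5$ acquires a rational $5$-point (Galois acts trivially on $E[5]/C_5$), forcing a rational point of order $15$ or $35$ on $E$ or on $E/C_5$---kills both cases in one line via $\Phi(1)$. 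Likewise, your exclusion of $\cC_4\lesssim E(\Q)$ and $E[2]\subseteq E(\Q)$ for $p=2$ is self-contained, while the paper defers to an external remark in \cite{GL16}.

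There is, however, a genuine gap in your ``main obstacle'' step. In the configuration $G_E(5)=\texttt{5Cs.1.1}$, $E(\Q)[2^\infty]\simeq\cC_2$, $E(K)[5^\infty]\simeq\cC_{25}$ with $K$ non-Galois (Galois closure $\mathcal F_5$), you propose to run through subgroups of $\GL_2(\Z/25\Z)$ reducing to (a conjugate of) $H_{1,1}$ and compute stabilizers of order-$25$ vectors. But that computation has \emph{already been carried out} in the proof of Lemma~\ref{lem5}, and its conclusion is precisely that such non-Galois quintic fields with a $25$-point \emph{do} occur at the group-theoretic level. The mod-$25$ image sees nothing of the $2$-torsion, so your proposed search cannot detect the extra constraint ``$E$ has a rational $2$-point''; the expected outcome you describe is not what the computation delivers.

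The paper closes this gap by an entirely different mechanism: for $G_E(5)=\texttt{5Cs.1.1}$ one has $E[5]\subseteq E(\Q(\zeta_5))$ (this is the meaning of $d=4$ in Table~\ref{tableSutherland}), and the rational $2$-point also lies in $E(\Q(\zeta_5))$, so $\cC_5\times\cC_{10}\lesssim E(\Q(\zeta_5))_{\tors}$; but Bruin--Najman \cite{BN16} have computed $\Phi(\Q(\zeta_5))$ and $\cC_5\times\cC_{10}$ is not in it. Note this argument does not even use the $\cC_{25}$ hypothesis: it rules out \texttt{5Cs.1.1} altogether once a rational $2$-point is present, leaving only \texttt{5B.1.1}, where your Galois/$50$-isogeny argument finishes. (An alternative that stays within your isogeny framework: \texttt{5Cs.1.1} gives \emph{two} independent rational $5$-isogenies $E\to E_1$, $E\to E_2$; the composite $E_1\to E\to E_2$ is a rational cyclic $25$-isogeny, and pushing the rational $2$-point to $E_1$ then produces a rational $50$-isogeny on $E_1$, contradicting Theorem~\ref{isog}.)
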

 
\begin{proof}
First we may suppose $p\ne 11$ by Lemma \ref{lem11}.  Assume that $p,q\in\{2,3,7\}$, then by Lemma \ref{lem_p} we have that the $\{p,q\}$-primary torsion is defined over $\Q$. That is, $E(K)[\{p,q\}^\infty] =E(\Q)[\{p,q\}^\infty]$. Let $G\in \Phi(1)$ such that $E(\Q)_{\tors} \simeq G$. Then $G\in\{\cC_6,\cC_{2}\times\cC_{6}\}$.

It remains to prove the case $p=5$ and $q\in\{2,3,7\}$. Without loss of generality we can assume that the $5$-primary torsion is not defined over $\Q$, otherwise $E(K)[\{5,q\}^\infty] =E(\Q)[\{5,q\}^\infty]$ and the unique possibility is $\cC_{10}$. In particular, by Lemma \ref{lem5} we have that $E$ has non-CM and the $5$-primary torsion of $E$ over $K$ is cyclic of order $5$ or $25$, and $E(\Q)[5^\infty]=\{\mathcal O\}$ or $E(\Q)[5^\infty]\simeq \cC_5$ respectively. Depending on $q\in\{2,3,7\}$ we have: 
\\
$\bullet$ $q=2$:
  \begin{itemize}
\item[$\star$] $E(K)[5^\infty]\simeq \cC_5$. If $E(K)[2^\infty]\simeq \cC_2$ then there are infinitely many elliptic curves such that $E(K)[\{2,5\}^\infty] \simeq \cC_{10}$ (see Proposition \ref{inf_5-10}). In fact, the above $2$-primary torsion is the unique possibility since if $\cC_4\lesssim E(\Q)$ then $\cC_{20}\not \lesssim E(K)$ and if $E[2]\lesssim E(\Q)$ then $\cC_{2}\times \cC_{10}\not \lesssim E(K)$ (see Remark below Theorem 7 of \cite{GL16}).
\item[$\star$]  $E(K)[5^\infty]\simeq \cC_{25}$. Assume that $E(K)[2]\ne \{\mathcal O\}$. If $G_E(5)$ is labeled \texttt{5B.1.1} then $K$ is Galois and therefore, by Lemma \ref{lem_chou}, $E$ has a rational $50$-isogeny, that is not possible by Theorem \ref{isog}. Now suppose that $G_E(5)$ is labeled \texttt{5Cs.1.1}. Since $E(K)[2^\infty]=E(\Q)[2^\infty]$ and  $E(\Q(\zeta_5))=E[5]$ (by Table \ref{tableSutherland}) we deduce $\cC_5\times \cC_{10}\lesssim E(\Q(\zeta_5))$. But this is not possible since Bruin and Najman \cite[Theorem 6]{BN16} have proved that any elliptic curve defined over $\Q(\zeta_5)$ have torsion subgroup isomorphic to a group in the following set
$$
\Phi(\Q(\zeta_5))=\left\{ \, \cC_n \; | \; n=1,\dots,10,12,15,16 \right\} \cup \left\{\, \cC_2 \times \cC_{2m} \; | \; m=1,\dots,4 \right\}\cup\{\cC_5\times\cC_5\}.
$$
\end{itemize}
$\bullet$ $q=3$: A necessary condition if $15$ divides $E(K)_{\tors}$ is that the $5$-torsion is not defined over $\Q$ and the $3$-torsion is defined over $\Q$. By Lemma \ref{lem5}, $G_E(5)$ is labeled \texttt{5B.1.2}. Zywina \cite[Theorem 1.4]{zywina}  has showed that  its $j$-invariant is of the form
$$
J_5(t)=\frac{(t^4+228t^3+494t^2-228t+1)^3}{t(t^2-11t-1)^5},\qquad\mbox{for some $t\in \Q$.}
$$
On the other hand, we have proved that the $3$-torsion is defined over $\Q$. Then, by Table \ref{tableSutherland}, $G_E(3)$ is labeled \texttt{3Cs.1.1} or \texttt{3B.1.1}. Again Zywina \cite[Theorem 1.2]{zywina} characterizes the $j$-invariant of $E/\Q$ depending on the conjugacy class of $G_E(3)$: 
  \begin{itemize}
\item[$\star$] \texttt{3Cs.1.1}: $\displaystyle J_1(s)=27\frac{(s+1)^3(s+3)^3(s^2+3)^3}{s^3(s^2+3s+3)^3}$, for some $s\in \Q$. We must have an equality of $j$-invariants: $J_1(s)=J_5(t)$. In particular, grouping cubes we deduce:
$$
t(t^2-11t-1)^2=r^3,\qquad\mbox{for some $t,r\in\Q$}.
$$
This equation defines a curve $C$ of genus $2$, which in fact transforms (\href{http://www.uam.es/personal_pdi/ciencias/engonz/research/tables/tors5/Lemma12.txt}{\color{blue}{according to \texttt{Magma}}}) to\footnote{A remarkable fact is that this genus $2$ curve is {\it new modular} of level $45$ (see \cite{EGJ}).} $C'\,:\,y^2 = x^6 + 22x^3 + 125$. The jacobian of $C'$ has rank $0$, so we can use the Chabauty method, and determine that the points on $C'$ are 
$$
C'(\Q)=\{ (1 : \pm 1: 0)\}.
$$
Therefore $C'$ has no affine points and we obtain
$$
C(\Q)=\{ (0,0) \}\cup\{(1:0:0)\}.
$$
Then $t=0$, and since $t$ divides the denominator of $J_5(t)$ we have reached a contradiction to the existence of such curve $E$.\\
\item[$\star$] \texttt{3B.1.1}: $\displaystyle  J_3(s)=27\frac{(s+1)(s+9)^3}{s^3}$, for some $s\in \Q$. A \href{http://www.uam.es/personal_pdi/ciencias/engonz/research/tables/tors5/Lemma12.txt}{\color{blue}{similar argument}} with the equality $J_3(s)=J_5(t)$ gives us the equation:
$$
C\,:\,27(s+1)(s+9)^3t(t^2-11t-1)^5=s^3(t^4+228t^3+494t^2-228t+1)^3.
$$
In this case the above equation defines a genus $1$ curve which has the following points:
{\small
$$
\{\left({-2}/{27} , {-1}/{8} \right),\left({-27}/{2} , -2\right), \left({-27}/{2} , {1}/{2} \right), \left(0 , 0 \right), \left({-2}/{27} , 8\right)\}\cup \{(0 : 1 : 0),\left({1}/{27} : 1 : 0\right),(1 : 0 : 0)
\}.
$$
The curve $C$ is $\Q$-isomorphic to the elliptic curve \texttt{15a3}, which Mordell-Weil group (over $\Q$) is of order $8$. }Therefore we deduce that $s={-2}/{27}, {-27}/{2}$, and in particular
$$
j(E)\in \{-5^2/2,-5^2\cdot 241^3/2^3\}.
$$
Therefore there are two $\Qbar$-isomorphic classes of elliptic curves. Each pair of elliptic curves in the same $\Qbar$-isomorphic class is related by a quadratic twist. Najman \cite{twist} has made an exhaustive study of how the torsion subgroup changes upon quadratic twists. In particular Proposition 1 (c) \cite{twist} asserts that if $E/\Q$ is neither \texttt{50a3} nor \texttt{450b4}, and it satisfies $E(\Q)_{\tors}\simeq \cC_3$ and the $(-3)$-quadratic twist $E^{-3}$, satisfies $E^{-3}(\Q)_{\tors}\not \simeq \cC_3$, then for any quadratic twist we must have $E^d(\Q)\simeq \cC_1$ for all $d\in \Q^*/(\Q^*)^2$. We apply this result to the elliptic curves \texttt{50a1} and \texttt{450b2} that have $j$-invariant $-5^2/2$ and $-5^2\cdot 241^3/2^3$ respectively. Both curves have cyclic torsion subgroup (over $\Q$) of order $3$ and the corresponding torsion subgroup of the $(-3)$-quadratic twist is trivial. Thus we are left with two elliptic curves (\texttt{50a1} and \texttt{450b2}) to finish the proof. Applying the algorithm described in Section \ref{sec_ex} we compute that the $5$-torsion does not grow over any quintic number field for both curves.
\end{itemize}

$\bullet$ $q=7$. Similar to the the case $q=3$, we deduce that $E/\Q$ has the $7$-torsion defined over $\Q$ and $G_E(5)$ is labeled \texttt{5B.1.2}. Looking at Table \ref{tableSutherland} we deduce that $E/\Q$ has a rational $5$-isogeny, since $d_0=1$ for \texttt{5B.1.2}. Then, since $E/\Q$ has a point of order $7$ defined over $\Q$, there exists a rational $35$-isogeny, which contradicts Theorem \ref{isog}. 
\end{proof}

\subsection{$\{p,q,r\}$-primary torsion subgroup} 

\begin{lemma}
Let $E/\Q$ be an elliptic curve and $K/\Q$ a quintic number field. Let $p,q,r\in \{2,3,5,7,11\}$, $p\ne q\ne r$, such that $pqr$ divides the order of $E(K)_{\tors}$. Then $E(K)[\{p,q,r\}^\infty]=\{\mathcal O\}$.
\end{lemma}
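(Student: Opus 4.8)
The plan is to reduce this statement to the $\{p,q\}$-primary cases already handled in Lemma \ref{lem_pq}, exploiting the fact that a product of three distinct primes imposes a much larger constraint than is available in degree $5$. First I would recall from Proposition \ref{prop11} (\ref{i1_prop10}) that the only primes that can divide the order of $E(K)_{\tors}$ are $2,3,5,7,11$, and from Lemma \ref{lem11} that if $11$ divides the torsion order then $E$ is one of three specific curves with $E(K)_{\tors}\simeq\cC_{11}$, in which case no other prime occurs; so we may assume $\{p,q,r\}\subseteq\{2,3,5,7\}$. If $5\notin\{p,q,r\}$ then $\{p,q,r\}=\{2,3,7\}$ and, by Lemma \ref{lem_p}, the entire $\{2,3,7\}$-primary part descends to $\Q$; but $42$ never divides $\#E(\Q)_{\tors}$ for $E/\Q$ by Mazur's classification of $\Phi(1)$, a contradiction.

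Hence we may assume $5\in\{p,q,r\}$, say $p=5$ and $\{q,r\}\subseteq\{2,3,7\}$. Now I would apply Lemma \ref{lem_pq} to the pair $\{5,q\}$: since $5q$ divides $\#E(K)_{\tors}$, either $E(K)[\{5,q\}^\infty]=E(\Q)[\{5,q\}^\infty]$ (which forces the $5$-part to be defined over $\Q$, i.e. $E(\Q)[5^\infty]\simeq\cC_5$ with $E(\Q)_{\tors}\in\{\cC_6,\cC_{10},\cC_2\times\cC_6\}$, but none of these has order divisible by $5q$ for two distinct $q\in\{2,3\}$ — indeed $\cC_{10}$ is the only one divisible by $5$, so $q=2$ and no room is left for the third prime $r$), or $E(K)[\{5,q\}^\infty]\simeq\cC_{10}$, which likewise forces $q=2$. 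Applying the same dichotomy to the pair $\{5,r\}$ forces $r=2$ as well, contradicting $q\ne r$. So in every case we reach a contradiction, proving $E(K)[\{p,q,r\}^\infty]=\{\mathcal O\}$ — in fact the hypothesis that $pqr$ divides $\#E(K)_{\tors}$ is never satisfied.

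The one subtlety is that Lemma \ref{lem_pq} is phrased as a conclusion about $E(K)[\{p,q\}^\infty]$ for a pair, so to invoke it I must first check that its hypotheses ($p\ne q$, both in $\{2,3,5,7,11\}$, $pq\mid\#E(K)_{\tors}$) hold for the relevant pairs, which is immediate once $pqr\mid\#E(K)_{\tors}$. The main point to be careful about is the bookkeeping of which groups in $\Phi(1)$ survive: the "former case" of Lemma \ref{lem_pq} yields $E(\Q)_{\tors}\simeq G$ with $G\in\{\cC_6,\cC_{10},\cC_2\times\cC_6\}$, and since a group of order divisible by $pqr\ge 30$ with $\{p,q,r\}\subseteq\{2,3,5,7\}$ cannot embed in any of these (their orders are $6,10,12$), this case is vacuous; and the "latter case" $\cC_{10}$ already pins down the prime set to $\{2,5\}$, leaving no third prime. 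I expect no genuine obstacle here — the result is essentially a corollary of Lemma \ref{lem_pq} together with Mazur's list, and the proof is a short case analysis.
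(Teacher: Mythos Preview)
Your proof is correct and follows essentially the same approach as the paper, which simply invokes Lemma~\ref{lem_pq} in one sentence: since that lemma forces any prime pair dividing $\#E(K)_{\tors}$ to be $\{2,3\}$ or $\{2,5\}$, three distinct primes are immediately impossible. Your separate treatment of the cases $11\in\{p,q,r\}$ (via Lemma~\ref{lem11}) and $\{p,q,r\}=\{2,3,7\}$ (via Lemma~\ref{lem_p} and Mazur) is correct but unnecessary, as these are already absorbed into Lemma~\ref{lem_pq}.
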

 
\begin{proof}
Lemma \ref{lem_pq} shows that there do not exist three different primes $p,q,r$ such that $pqr$ divides the order of $E(K)_{\tors}$.
\end{proof}

\section{Proof of Theorems \ref{main1}, \ref{main2} and \ref{main3}}

We are ready to prove Theorems   \ref{main1}, \ref{main2} and \ref{main3}. 

\begin{proof}[Proof of Theorem \ref{main1}]
Since we have $\Phi_\Q(1)\subseteq \Phi_\Q(5)$, let us prove that the unique torsion structures that remain to add to $\Phi_\Q(1)$ to obtain $\Phi_\Q(5)$ are $\cC_{11}$ and $\cC_{25}$. Let $H\in \Phi_\Q(5)$ be such that $H\not\in\Phi_\Q(1)$. Lemma \ref{lem_pq} shows that $|H|=p^n$, for some prime $p$ and a positive integer $n$. Now, Lemma \ref{lem_p} shows that $p\in\{5,11\}$. If $p=11$ then $n=1$ by Lemma \ref{lem11}. If $p=5$ then $n=2$ by Lemma \ref{lem5}, and an example with torsion subgroup isomorphic to $\cC_{25}$ is given in Table \ref{ex_5}. This finish the proof for the set $\Phi_\Q(5)$.

Now the CM case.  Notice that $\Phi^{\cm}_\Q(1)\subseteq \Phi^{\cm}_\Q(5) \subseteq \Phi^{\cm}(5)$. We have that the unique torsion structure that belongs to $\Phi^{\cm}(5)$ and not to $\Phi^{\cm}_\Q(1)$ is $\cC_{11}$. But in Lemma \ref{lem11} we have proved that the elliptic curve \texttt{121b1} has torsion subgroup isomorphic to $\cC_{11}$ over $\Q(\zeta_{11})^+$.  Therefore $\Phi^{\cm}_\Q(5) = \Phi^{\cm}(5)$. This finishes the proof.
\end{proof}

The determination of $\Phi_\Q(5,G)$ will rest on the following result:

\begin{proposition}\label{propG}
Let $E/\Q$ be an elliptic curve and $K/\Q$ a quintic number field such that $E(\Q)_{\tors}\simeq G$ and $E(K)_{\tors}\simeq H$.
\begin{enumerate}
\item\label{t1} Let $p\in\{2,3,7\}$ and $G$ of order a power of $p$, then $H=G$.
\item\label{t25} If $H=\cC_{25}$, then $G=\cC_5$.
\end{enumerate}
\end{proposition}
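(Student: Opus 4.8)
The plan is to reduce each claim to the structural lemmas on $p$-primary torsion already established. For part~(\ref{t1}), suppose $G=E(\Q)_{\tors}$ has order a power of $p$ for $p\in\{2,3,7\}$. Since $H=E(K)_{\tors}$ contains $G$, the order of $H$ is divisible only by primes in $\{2,3,5,7,11\}$ by Proposition~\ref{prop11}(\ref{i1_prop10}). First I would argue that no new prime can divide $|H|$: if a prime $q\ne p$ divided $|H|$, then $pq\mid |E(K)_{\tors}|$, and Lemma~\ref{lem_pq} forces $E(K)[\{p,q\}^\infty]$ to be either $E(\Q)[\{p,q\}^\infty]$ (whence $q$ already divides $|G|$, contradiction) or $\cC_{10}$ (whence $p=2$, $q=5$, but then $E(\Q)[2^\infty]\simeq\cC_2$ and $5\mid|H|$, and one checks this is the $\cC_{10}$ case, which is excluded because it would make $5$ divide something while $G$ is a $2$-group — actually here we must invoke Lemma~\ref{lem_pq} again to see that $G=E(\Q)[\{2,5\}^\infty]\simeq\cC_{10}$ is impossible since $G$ is a $2$-group). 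So $|H|$ is a power of $p$. Then by Lemma~\ref{lem_p}, $E(K)[p^\infty]=E(\Q)[p^\infty]$, i.e. $H=G$.

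For part~(\ref{t25}): if $H=\cC_{25}$ then $E(K)[5^\infty]\simeq\cC_{25}$, so Lemma~\ref{lem5}(\ref{i1_lem5}) immediately gives $E(\Q)[5^\infty]\simeq\cC_5$. It remains to rule out extra prime-to-$5$ torsion in $G$; but $G\subseteq H=\cC_{25}$, so $G$ itself must be a subgroup of $\cC_{25}$, forcing $G=E(\Q)[5^\infty]\simeq\cC_5$.

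The main obstacle is the bookkeeping in part~(\ref{t1}) when $p=2$ and one wants to exclude $q=5$: one has to be careful that Lemma~\ref{lem_pq}'s dichotomy does not accidentally permit $E(K)[\{2,5\}^\infty]\simeq\cC_{10}$ while $G$ is a nontrivial $2$-group. The resolution is that in the $\cC_{10}$ branch of Lemma~\ref{lem_pq} the $2$-part of $E(K)$ is $\cC_2$ coming from $E(\Q)$, and a $\cC_{10}$ can only sit over $G\in\{\cC_1,\cC_2\}$; since we are told $G$ has order a power of $p=2$, the only live possibility $G=\cC_2$ still yields $H$ of order divisible by $5$, which is exactly the case $\mathcal H_\Q(5,\cC_2)=\{[\cC_{10}]\}$ — but that contradicts $H$ having order a power of $2$ only if we are additionally assuming $G$ strictly controls $H$'s prime support, which it need not. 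Hence the statement of~(\ref{t1}) should be read as: one first shows $|H|$ is a prime power, and the point is that this prime is $p$, not $5$. Concretely, if $G=\cC_2$ and $H=\cC_{10}$, then $5\mid|H|$ while $|G|=2$, which is consistent with Lemma~\ref{lem_pq}; so to get $H=G$ one actually needs $p\in\{2,3,7\}$ together with Lemma~\ref{lem_p} applied after ruling out the $\cC_{10}$ exception — and the $\cC_{10}$ exception is ruled out here precisely because it requires $q=5$ to divide $|H|$, which by Lemma~\ref{lem5} forces $G_E(5)$ to be one of a short list of labels, none compatible with $E(\Q)[2^\infty]\supsetneq\{\mathcal O\}$ being forced by $G$ a $2$-group without extra constraints — so I would instead argue directly that if $5\mid|H|$ then by Lemma~\ref{lem5} the $5$-part over $K$ is $\cC_5$ or $\cC_{25}$ with $E(\Q)[5^\infty]$ trivial or $\cC_5$ respectively, and combined with $E(\Q)_{\tors}=G$ a $p$-group this is consistent only in the $\cC_{10}$/$\cC_{25}$ exceptional rows, which are themselves excluded by hypothesis on $G$'s order being a power of $p\in\{2,3,7\}$ with $G$ nontrivial forcing $2\mid|G|$ and hence $G=E(\Q)[\{2,5\}^\infty]$ forced to be $\cC_{10}$, contradiction. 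I would present this cleanly as: combine Lemma~\ref{lem_p}, Lemma~\ref{lem_pq}, and Lemma~\ref{lem11} to conclude $|H|$ is a power of $p$, then Lemma~\ref{lem_p} gives $H=G$.
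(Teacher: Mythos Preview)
Your argument for part~(\ref{t25}) is correct and matches the paper exactly: it is Lemma~\ref{lem5}(\ref{i1_lem5}) together with the observation that $G\subseteq H=\cC_{25}$ forces $G$ to be a $5$-group.

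For part~(\ref{t1}) the paper's proof is a single clause: ``follows from Lemma~\ref{lem_p}''. Your detour through Lemma~\ref{lem_pq} and Lemma~\ref{lem11} is unnecessary, and the place where you get stuck --- $p=2$, $q=5$, with $G\simeq\cC_2$ and $H\simeq\cC_{10}$ --- is in fact a genuine counterexample to the statement as literally written (witnessed by \texttt{66c3} over $\Q(\sqrt[5]{12})$, see Table~\ref{ex_5}). So your circular attempts to exclude it cannot succeed. The proposition is loosely stated: its operative content, both in the one-line proof and in every use in Table~\ref{table1} (including cells like $G=\cC_5$, $H=\cC_{10}$, where $|G|$ is not a power of any $p\in\{2,3,7\}$ at all), is just Lemma~\ref{lem_p} itself --- for $p\in\{2,3,7\}$ the $p$-Sylow of $H$ equals the $p$-Sylow of $G$. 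Equivalently, if one reads the hypothesis as ``$|H|$ is a power of $p$'' rather than ``$|G|$ is a power of $p$'', the conclusion $H=G$ is immediate from Lemma~\ref{lem_p} alone: $H=E(K)[p^\infty]=E(\Q)[p^\infty]\subseteq G\subseteq H$. So do not try to salvage the argument through Lemma~\ref{lem_pq}; simply invoke Lemma~\ref{lem_p}.
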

\begin{proof}
The item (\ref{t1}) follows from Lemma \ref{lem_p} and (\ref{t25}) from Lemma \ref{lem5} (\ref{i1_lem5}). 
\end{proof}

\begin{proof}[Proof of Theorem \ref{main2}]

Let $E/\Q$ be an elliptic curve and $K/\Q$ a quintic number field such that 
$$
E(\Q)_{\tors}\simeq G\qquad\mbox{and}\qquad E(K)_{\tors}\simeq H.
$$
The group $H\in\Phi_\Q(5)$ (row in Table \ref{table1}) that does not appear in some $\Phi_\Q(5,G)$ for any $G\in\Phi(1)$  (column in Table \ref{table1}), with $G\subseteq H$  can be ruled out using Proposition \ref{propG}. In Table \ref{table1} we use:
\begin{itemize}
\item (\ref{t1}) and (\ref{t25}) to indicate which part of Proposition \ref{propG} is used, 
\item the symbol $-$ to mean the case is ruled out because $G \not\subset H$, 
\item with a $\checkmark$, if the case is possible and, in fact, it occurs. There are two types of check marks in Table \ref{table1}:
\begin{itemize}  
\item $\checkmark$ (without a subindex) means that $G=H$.
\item  $\checkmark_{\!\!\!5}$ means that $H\ne G$ can be achieved over a quintic number field $K$, and we have collected examples of curves and quintic number fields in Table \ref{ex_5}.
\end{itemize}
\end{itemize}

\

{\footnotesize
\renewcommand{\arraystretch}{1.2}
\begin{longtable}[h]{|c|c|c|c|c|c|c|c|c|c|c|c|c|c|c|c|}
\cline{1-16}
\backslashbox{$H$}{$G$} & $\cC_1$ & $\cC_2$ & $\cC_3$ & $\cC_4$ & $\cC_5$ & $\cC_6$ & $\cC_7$ & $\cC_8$ & $\cC_9$ & $\cC_{10}$ & $\cC_{12}$ & $\cC_2 \times \cC_2$ & $\cC_2 \times \cC_4$& $\cC_2 \times \cC_6$& $\cC_2 \times \cC_8$\\
\hline
\endfirsthead
\cline{1-16}
\backslashbox{$H$}{$G$} & $\cC_1$ & $\cC_2$ & $\cC_3$ & $\cC_4$ & $\cC_5$ & $\cC_6$ & $\cC_7$ & $\cC_8$ & $\cC_9$ & $\cC_{10}$ & $\cC_{12}$ & $\cC_2 \times \cC_2$ & $\cC_2 \times \cC_4$& $\cC_2 \times \cC_6$& $\cC_2 \times \cC_8$\\
\hline
\endhead
\endfoot
\endlastfoot
$\cC_1$ & $\checkmark$ & $-$ & $-$ & $-$ & $-$ & $-$ & $-$ & $-$ & $-$ & $-$ & $-$ & $-$ & $-$ & $-$ & $-$ \\
\hline
$\cC_2$ & (\ref{t1}) & $\checkmark$ & $-$ & $-$ & $-$ & $-$ & $-$ & $-$ & $-$ & $-$ & $-$ & $-$ & $-$ & $-$ & $-$ \\
\hline
$\cC_3$ & (\ref{t1}) & $-$ & $\checkmark$ & $-$ & $-$ & $-$ & $-$ & $-$ & $-$ & $-$ & $-$ & $-$ & $-$ & $-$ & $-$ \\
\hline
$\cC_4$ & (\ref{t1}) & (\ref{t1}) & $-$ & $\checkmark$ & $-$ & $-$ & $-$ & $-$ & $-$ & $-$ & $-$ & $-$ & $-$ & $-$ & $-$\\
\hline
$\cC_5$ & $\checkmark_{\!\!\!5}$ & $-$ & $-$ & $-$ & $\checkmark$ & $-$ & $-$ & $-$ & $-$ & $-$ & $-$ & $-$ & $-$ & $-$ & $-$ \\
\hline
$\cC_6$ & (\ref{t1}) & (\ref{t1}) & (\ref{t1}) & $-$ & $-$ & $\checkmark$ & $-$ & $-$ & $-$ & $-$ & $-$ & $-$ & $-$ & $-$ & $-$\\
\hline
$\cC_7$ & (\ref{t1})  & $-$ & $-$ & $-$ & $-$ & $-$ & $\checkmark$ & $-$ & $-$ & $-$ & $-$& $-$ & $-$ & $-$ & $-$ \\
\hline
$\cC_8$ &   (\ref{t1}) & (\ref{t1}) & $-$ & (\ref{t1}) & $-$ & $-$ & $-$ &  $\checkmark$ & $-$ & $-$ & $-$  & $-$ & $-$ & $-$ & $-$ \\
\hline
$\cC_9$ & (\ref{t1}) & $-$ & (\ref{t1}) & $-$ & $-$ & $-$ & $-$ & $-$ & $\checkmark$ & $-$ & $-$& $-$ & $-$ & $-$ & $-$\\
\hline
$\cC_{10}$ & (\ref{t1}) &  $\checkmark_{\!\!\!5}$ & $-$ & $-$ &   (\ref{t1})& $-$ & $-$ & $-$ & $-$ & $\checkmark$ & $-$ & $-$ & $-$ & $-$ & $-$ \\
\hline
$\cC_{11}$ & $\checkmark_{\!\!\!5}$ & $-$ & $-$ & $-$ & $-$ & $-$ & $-$ & $-$ & $-$ & $-$ & $-$ & $-$ & $-$ & $-$ & $-$ \\\hline
$\cC_{12}$ &  (\ref{t1}) &  (\ref{t1}) &  (\ref{t1}) &  (\ref{t1}) & $-$ &  (\ref{t1}) & $-$ & $-$ & $-$ & $-$ & $\checkmark$ & $-$ & $-$ & $-$ & $-$\\
\hline
$\cC_{25}$ & (\ref{t25})  & $-$ & $-$ & $-$ & $\checkmark_{\!\!\!5}$ & $-$ & $-$ & $-$ & $-$ & $-$ & $-$  & $-$ & $-$ & $-$ & $-$\\
\hline
$\cC_2 \times \cC_2$ & (\ref{t1})  & (\ref{t1})   & $-$ & $-$ & $-$ & $-$ & $-$ & $-$ & $-$ & $-$ & $-$  & $\checkmark$ & $-$ & $-$ & $-$\\
\hline
$\cC_2 \times \cC_4$ & (\ref{t1})  & (\ref{t1})  & $-$ & (\ref{t1})  & $-$ & $-$ & $-$ & $-$ & $-$ & $-$ & $-$ & (\ref{t1})  & $\checkmark$ & $-$ & $-$\\
\hline
$\cC_2 \times \cC_6$ &(\ref{t1})   & (\ref{t1})  & (\ref{t1})  & $-$ & $-$ & (\ref{t1})  & $-$ & $-$ & $-$ & $-$ & $-$ & (\ref{t1})  & $-$ & $\checkmark$ & $-$\\
\hline
$\cC_2 \times \cC_8$ & (\ref{t1})  & (\ref{t1})  & $-$ & (\ref{t1})  & $-$ & $-$ & $-$ &  (\ref{t1}) & $-$ & $-$ & $-$ &(\ref{t1})  & (\ref{t1})  & $-$ &$\checkmark$ \\
\hline
\caption{The table displays either if the case happens for $G=H$ ($\checkmark$), if it occurs over a quintic ($\checkmark_{\!\!\!5}$), if it is impossible because $G \not\subset H$ ($-$) or if it is ruled out by Proposition \ref{propG} (\ref{t1}) and (\ref{t25}).}\label{table1}
\end{longtable}
}
It remains to prove that  there are infinitely many $\Qbar$-isomorphism classes of elliptic curves $E/\Q$ with $H\in \Phi_\mathbb{Q} \left(5,G \right)$, except for the case $H=\cC_{11}$. Note that for any elliptic curve $E/\Q$ with $E(\Q)_\text{tors}$, there is always an extension $K/\Q$ of degree $5$ such that $E(K)_\text{tors}=E(\Q)_\text{tors}$. Then for any $G\in \Phi(1) \cap \Phi_\Q(5)$ the statement is proved. Now, since $\Phi_\Q(5)\setminus \Phi(1)=\{\cC_{11},\cC_{25}\}$, the only case that remains to prove is $H=\cC_{25}$. This case will be proved in Proposition \ref{inf_25}.
\end{proof}

\begin{proof}[Proof of Theorem \ref{main3}]
Let $E/\Q$ be an elliptic curve such that the torsion grows to $\cC_{11}$ over a quintic number field $K$. Then by  Lemma \ref{lem11} we know that $K=\Q(\zeta_{11})^+$ and the torsion does not grow for any other quintic number field. Therefore to finish the proof it remains to prove that there does not exist an elliptic curve $E/\Q$ and two non-isomorphic quintic number fields $K_1,K_2$ such that $E(K_i)_{\tors}\simeq H\in \Phi_\Q(5)$, $i=1,2$, and $E(\Q)_{\tors}\not\simeq H$. Note that the compositum $K_1K_2$ satisfies $[K_1K_2:\Q]\le [K_1:\Q][K_2:\Q]=25$. Now, by Theorem \ref{main2} we deduce $H\in \{\cC_5,\cC_{10},\cC_{25}\}$:

$\bullet$ First suppose that $H\in \{\cC_5,\cC_{10}\}$. Then by Lemma \ref{lem5}, $G_E(5)$ is labeled \texttt{5B.1.2}. Now, since $K_1\not\simeq K_2$ we deduce $K_1K_2=\Q(E[5])$ and, in particular, $\Gal(\widehat{K_1K_2}/\Q)\simeq G_E(5)$. In this case we have that $G_E(5)\simeq \mathcal F_5$, where $\mathcal F_5$ denotes the Fr\"obenius group of order $20$. Diagram \ref{F5} shows the lattice subgroup of $\mathcal F_5$, where $\mathcal H_{k,i}$ denotes the $k$-th subgroup of index $i$ in $\mathcal F_5$. Note that all the index $5$ subgroups $\mathcal H_{k,5}$ are conjugates in $\mathcal F_5$. That is, their associated fixed quintic number fields are isomorphic. This proves that $K_1\simeq K_2$.
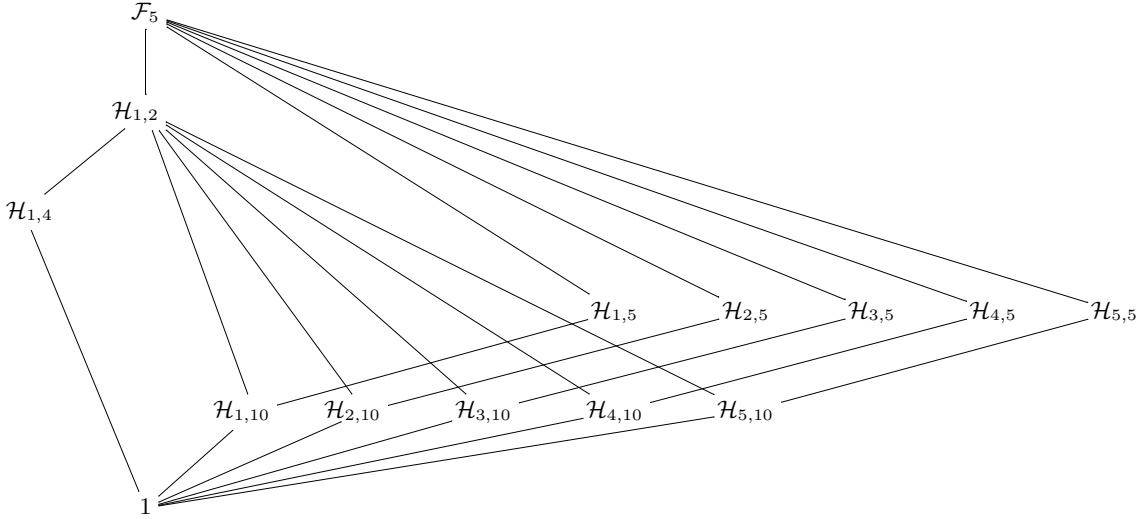
\begin{figure}[h]
{\footnotesize
$$
\xymatrix{
    &    \mathcal F_5     \ar@{-}[d]  \ar@{-}[dddrrrr] \ar@{-}[dddrrrrr] \ar@{-}[dddrrrrrr] \ar@{-}[dddrrrrrrr] \ar@{-}[dddrrrrrrrr] & &&&&&&&&   \\
    &   \!\! \!\!\!\mathcal H_{1,2}    \ar@{-}[dl]  \ar@{-}[dddr] \ar@{-}[dddrrr]  \ar@{-}[dddrr] \ar@{-}[dddrrrr] \ar@{-}[dddrrrrr] & &&&&&&&&   \\
\,\,\,\mathcal H_{1,4}  \ar@{-}[dddr]   &       & &&&&&&&&   \\
   &       & &&& \!\mathcal H_{1, 5}  \ar@{-}[dlll] & \!\mathcal H_{2,5} \ar@{-}[dlll]& \!\mathcal H_{3,5} \ar@{-}[dlll]& \!\mathcal H_{4,5} \ar@{-}[dlll]& \!  \mathcal H_{5,5} \ar@{-}[dlll]\\
   &       & \!\!\!\!\!\!\mathcal H_{1,10}  \ar@{-}[dl]&\!\!\!\!\!\!\mathcal H_{2,10}  \ar@{-}[dll]&\!\mathcal H_{3,10}  \ar@{-}[dlll]&\!\mathcal H_{4,10} \ar@{-}[dllll] &\!\mathcal H_{5,10} \ar@{-}[dlllll]  &&& \\ 
    &    1  & &  &&&&&&&    
    }
$$
}
 \caption{Lattice subgroup of $\mathcal F_5$}\label{F5}
 \end{figure}

$\bullet$ Finally suppose that $H=\cC_{25}$. In this case we use a similar argument as above but replacing $G_E(5)$ by $G_E(25)$. We know by Lemma \ref{lem5} that $G_E(5)$ is labeled \texttt{5B.1.1} or \texttt{5Cs.1.1}, but we do not have an explicit description of $G_E(25)$. For that reason we apply an analogous algorithm as the one used in the proof of Lemma \ref{lem5} (\ref{i1_lem5}). By \cite[Theorem 1.4 (iii)]{zywina}) we have that $G_E(5)$ is conjugate in $\GL_2(\Z/5\Z)$ to
$$
H_{6,1}=\left\langle\begin{pmatrix}1 &0 \\ 0 & 2\end{pmatrix},\begin{pmatrix}1 &1 \\ 0 & 1\end{pmatrix}\right\rangle\quad\mbox{or}\quad H_{1,1}=\left\langle\begin{pmatrix}1 &0 \\ 0 & 2\end{pmatrix}\right\rangle\,,
$$
depending if $G_E(5)$ is labeled \texttt{5B.1.1} or \texttt{5Cs.1.1} respectively.

Suppose that $K_1\not\simeq K_2$, then $K_1K_2=\Q(E[25])$. Therefore $\Gal(\widehat{K_1K_2}/\Q)\simeq G_E(25)$ and $|G_E(25)|\le 25$.  Now, we fix $\mathcal H$ to be $H_{6,1}$ or $H_{1,1}$ and since we do not have an explicit description of $G_E(25)$ \href{http://www.uam.es/personal_pdi/ciencias/engonz/research/tables/tors5/Theorem3.txt}{\color{blue}{we run a \texttt{Magma} program}} where the input is a subgroup $G$ of $GL_2(\Z/25\Z)$ satisfying 
\begin{itemize}
\item $|G|\le 25$,
\item $G \equiv H\,\, (\mbox{mod $5$})$ for some conjugate $H$ of $\mathcal H$ in $\GL_2(\Z/5\Z)$,
\item there exists $v\in (\Z/25\Z)^2$ of order $25$ such that $[G:G_v]=5$.
\end{itemize}
If $\mathcal H=H_{6,1}$ the above algorithm does not return any subgroup $G$. In the case $\mathcal H=H_{1,1}$ all the subgroups returned are isomorphic either to $\mathcal F_5$ or to $\mathcal C_{20}$. If $G\simeq \mathcal F_5$ then we have proved that it has five index $5$ subgroups, all of them at the same conjugation class. If $G\simeq \mathcal C_{20}$ there is only one subgroup of index $5$.   We have reached a contradiction with $K_1\not\simeq K_2$. This finishes the proof.
\end{proof}

\section{Infinite families of rational elliptic curves where the torsion grows over a quintic number field.}
Let $E/\Q$ be an elliptic curve and $K$ a quintic number field such that $E(\Q)_{\tors}\simeq G\in\Phi(1)$ and $E(K)_{\tors}\simeq H\in \Phi_\Q(5)$. Theorem \ref{main3} shows that $G\not\simeq H$ in the following cases:
$$
(G,H)\in\{\,(\cC_1,\cC_5)\,,\,(\cC_1,\cC_{11})\,,\,(\cC_2,\cC_{10})\,,\,(\cC_5,\cC_{25})\,\}.
$$
By Lemma \ref{lem11} we have that the pair $(\cC_1,\cC_{11})$ only occurs in three elliptic curves. For the rest of the above pairs we are going to prove that there are infinitely many non--isomorphic classes of elliptic curves and quintic number fields satisfying each pair.

\subsection{$(\cC_1,\cC_5)$ and $(\cC_2,\cC_{10})$.}\label{sec_61}

Let $E/\Q$ be an elliptic curve and $K$ a quintic number field such that $E(\Q)[5]=\{\mathcal O\}$ and $E(K)[5]\simeq \cC_5$. Then Theorem \ref{main2} tells us that:
$$
E(\Q)_{\tors}\simeq \cC_1\,\,\mbox{and}\,\,  E(K)_{\tors}\simeq \cC_5,\qquad\mbox{or}\qquad E(\Q)_{\tors}\simeq \cC_2\,\,\mbox{and}\,\,  E(K)_{\tors}\simeq \cC_{10}.
$$
First notice that $E$ has non-CM, since $\cC_5$ is not a subgroup of any group in $\Phi^{\cm}(5)$. Then Lemma \ref{lem5} shows that $G_E(5)$ is labeled \texttt{5B.1.2} ($H_{5,1}$ in Zywina's notation). Then Zywina  \cite[Theorem 1.4(iii)]{zywina} proved that there exists $t\in \Q$ such that $E$ is isomorphic (over $\Q$) to $\mathcal E_{5,t}$:
$$
\mathcal E_{5,t}\,:\,  y^2=x^3 -27(t^4 + 228t^3 + 494t^2 - 228t + 1)x+54 (t^6 - 522t^5 - 10005t^4 - 10005t^2 + 522t + 1).
$$
Table \ref{tableSutherland} shows that the degree of the field of definition of a point of order $5$ in $E$ is $4$ or $5$. Moreover, we can \href{http://www.uam.es/personal_pdi/ciencias/engonz/research/tables/tors5/Proposition15.txt}{\color{blue}{compute explicitly}} the number fields factorizing the $5$-division polynomial $\psi_5(x)$ attached to $E$. We define the following polynomial of degree $5$:
$$
{\small
\begin{array}{l}
p_5(x)=x^5 + (-15t^2 - 450t - 15)x^4 + (90t^4 - 65880t^3 + 22860t^2 + 11880t + 90)x^3\\
 \qquad\qquad\quad+ (-270t^6 - 1015740t^5 - 7086690t^4 + 5725080t^3 - 4520610t^2 - 82620t - 270)x^2 \\
 \qquad\qquad\qquad + (405t^8 - 8874360t^7 - 58872420t^6 - 253721160t^5 - 1423822050t^4 + 637175160t^3 + 18109980t^2\\
  \qquad\qquad\qquad\quad + 223560t + 405)x - 243t^{10} - 22886226t^9 - 485812647t^8 + 3223702152t^7 - 34272829350t^6 \\
   \qquad\qquad\qquad\qquad -21920257260t^5 - 53316735462t^4 - 2958964344t^3 - 74726631t^2 - 211410t - 243.
\end{array}
}
$$
Then $p_5(x)$ divides $\psi_5(x)$ and we have $E(\Q(\alpha))[5]=\langle R \rangle\simeq \cC_{5}$, where $p_5(\alpha)=0$ and $\alpha$ is the $x$-coordinate of $R$.

Now suppose that $E(\Q)_{\tors}\simeq \cC_2$, then $G_E(2)$ is labeled \texttt{2B}. Then Zywina \cite[Theorem 1.1]{zywina}  proved that its $j$-invariant is of the form
$$
J_2(s) = 256\frac{(s+1)^3}{s},\qquad\mbox{for some $s\in \Q$.}
$$
Therefore we have $J_2(s)=j(\mathcal E_{5,t})$ for some $s,t\in\Q$. In other words we have a solution of the next equation
$$
256\frac{(s+1)^3}{s}=\frac{(t^4+228t^3+494t^2-228t+1)^3}{t(t^2-11t-1)^5}.
$$
This equation defines a curve $C$ of genus $0$ with $(0,0)\in C(\Q)$, which can be parametrize (\href{http://www.uam.es/personal_pdi/ciencias/engonz/research/tables/tors5/Proposition15.txt}{\color{blue}{according}} to \texttt{Magma} and making a linear change of the projective coordinate in order to simplify the parametrization) by:
$$
(s,t)=\left(\frac{-512 ( 5 r+1) (5 r^2-1)^5}{(5 r-1) ( 5 r+3) ( 
    5 r^2+10 r+1)^5}\,\,,\,\, \frac{2 ( 5 r+3)^2}{(5 r-1)^2 ( 5 r+1)}\right),\qquad\mbox{where $r\in\Q$.}
$$
Finally, replacing the above value for $t$ in $\mathcal E_{5,t}$ and simplifying the Weierstrass equation we obtain: 
$$
E_r:y^2=x^3- 2 (5 r^2+ 2 r + 1) (5 r^4- 40 r^3- 30 r^2+1) x^2+84375 (5 r-1) (5 r+3) (5 r^2+ 10 r+1)^5 x.
$$
Thus we have proved the following result:
\begin{proposition}\label{inf_5-10}
There exist infinitely many $\Qbar$-isomorphic classes of elliptic curves $E/\Q$ such that $E(\Q)_{\tors}\simeq \cC_1$ (resp. $\cC_2$) and infinitely many quintic number fields $K$ such that $E(K)_{\tors}\simeq \cC_{5}$ (resp. $\cC_{10}$).
\end{proposition}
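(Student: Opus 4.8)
The two families $\mathcal E_{5,t}$ (handling the pair $(\cC_1,\cC_5)$) and $E_r$ (handling $(\cC_2,\cC_{10})$) have already been exhibited, so the plan is to harvest from them infinitely many of the required examples. The cheap part is the count of $\Qbar$-isomorphism classes: $j(\mathcal E_{5,t})=J_5(t)$ is a non-constant rational function of $t$, and $j(E_r)$, obtained by feeding the rational parametrization $t=t(r)$ into $J_5$, is non-constant in $r$, so each family hits infinitely many $j$-invariants. Also, for all but finitely many values of the parameter $G_E(5)$ is conjugate to $H_{5,1}$ (label \texttt{5B.1.2}), so by Table~\ref{tableSutherland} such a curve has a rational $5$-isogeny but no rational point of order $5$, and $E_r$ carries in addition the rational $2$-torsion point $(0,0)$.

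The substantial step is to force the full rational torsion to be exactly $\cC_1$ (resp.\ $\cC_2$) for infinitely many parameter values. I would first trim the possibilities: $11\nmid|E(\Q)_{\tors}|$ by Mazur; $7\nmid|E(\Q)_{\tors}|$, because a rational point of order $7$ together with the rational $5$-isogeny yields a rational $35$-isogeny, contradicting Theorem~\ref{isog}; and a rational point of order $6$ would similarly give a rational $30$-isogeny, again impossible. So the only remaining danger is extra rational $2$- or $3$-torsion for $\mathcal E_{5,t}$, and extra rational $2$- or $4$-torsion for $E_r$. To dispose of this I would verify, by a finite computation over the function field, that the $2$- and $3$-division polynomials of the generic fibre of $\mathcal E_{5,t}$ over $\Q(t)$ are irreducible (hence its torsion over $\Q(t)$ is $\cC_1$) and that for the generic fibre of $E_r$ over $\Q(r)$ the quadratic carved out by the two remaining $2$-torsion $x$-coordinates is irreducible and there is no rational point of order $4$ (hence its torsion over $\Q(r)$ is $\cC_2$). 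The parameter values at which the torsion of a specialization exceeds the generic value then lie in the image of the $\Q$-points of a proper cover of the parameter line, which is a thin subset of $\Q$ (in the favourable cases, even a finite one), so by Hilbert's irreducibility theorem infinitely many parameter values survive. I expect this to be the main obstacle; everything else is either a non-constancy remark or a direct appeal to the $p$-primary lemmas.

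Finally, for the quintic field, the five roots of $p_5(x)$ form a single orbit of $H_{5,1}$ acting on the $x$-coordinates of the $5$-torsion, so $p_5(x)$ is irreducible of degree $5$ over $\Q(t)$, and by Hilbert irreducibility it stays irreducible under all but thinly many specializations $t\mapsto t_0$; for such $t_0$, $K:=\Q(\alpha)$ with $p_5(\alpha)=0$ is a genuine quintic number field with $\mathcal E_{5,t_0}(K)[5]=\langle R\rangle\simeq\cC_5$, and the analogous quintic factor serves $E_r$. Removing the finitely many bad parameters and the countably many thin sets above, there remain infinitely many $t_0$ (resp.\ $r_0$) for which $\mathcal E_{5,t_0}(\Q)_{\tors}\simeq\cC_1$ (resp.\ $E_{r_0}(\Q)_{\tors}\simeq\cC_2$); and over $K$ one has $\mathcal E_{5,t_0}(K)[5^\infty]\simeq\cC_5$ by Lemma~\ref{lem5}~(\ref{i1_lem5}) (the label \texttt{5B.1.2} precludes $5^n$-torsion with $n>1$ over a quintic field), $\mathcal E_{5,t_0}(K)[11^\infty]=\{\mathcal O\}$ by Lemma~\ref{lem11} (the curve is none of \texttt{121a2}, \texttt{121c2}, \texttt{121b1}, whose $j$-invariants are fixed), and $\mathcal E_{5,t_0}(K)[p^\infty]=\mathcal E_{5,t_0}(\Q)[p^\infty]$ for every prime $p\ne 5,11$ by Lemma~\ref{lem_p}. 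Assembling the $p$-primary pieces gives $\mathcal E_{5,t_0}(K)_{\tors}\simeq\cC_5$, and the same chain with $E_{r_0}(\Q)_{\tors}\simeq\cC_2$ gives $E_{r_0}(K)_{\tors}\simeq\cC_2\times\cC_5\simeq\cC_{10}$. Since the $j$-invariants are pairwise distinct, the curves produced fall into infinitely many $\Qbar$-isomorphism classes, and the quintic fields that occur are infinitely many as well, which completes the plan.
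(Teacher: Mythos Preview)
Your plan is correct, and in fact more carefully argued than the paper's own treatment. The difference is one of economy: the paper has already proved the classification part of Theorem~\ref{main2} (via Proposition~\ref{propG} and Lemma~\ref{lem_pq}), and it simply invokes that classification at the start of \S\ref{sec_61} to conclude that \emph{any} curve $E$ with $E(\Q)[5]=\{\mathcal O\}$ and $E(K)[5]\simeq\cC_5$ over a quintic $K$ automatically has $E(\Q)_{\tors}\in\{\cC_1,\cC_2\}$ and $E(K)_{\tors}\in\{\cC_5,\cC_{10}\}$. Once that is in hand, the paper just writes down $\mathcal E_{5,t}$, the factor $p_5$, and the subfamily $E_r$, and declares the proposition proved; the separation of the $\cC_1$ and $\cC_2$ cases is left implicit (the $\cC_2$ locus is the image of a non-trivial cover $r\mapsto t(r)$, hence a thin subset of the $t$-line).

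You instead re-derive the constraint $E(\Q)_{\tors}\in\{\cC_1,\cC_2\}$ from scratch: isogeny bounds (Theorem~\ref{isog}) kill $7$, $11$, and combined $2$--$3$ torsion, and Hilbert irreducibility handles the residual $2$- and $3$-torsion for generic parameter values. This is perfectly valid but duplicates work already packaged in Lemma~\ref{lem_pq}; the payoff is that your argument is more self-contained and makes the ``generic specialization'' step explicit, whereas the paper's version is shorter because it cashes in the earlier lemmas. Two small remarks: what you actually need for the $3$-torsion is ``no $\Q(t)$-rational root of $\psi_3$'' rather than full irreducibility, and your final clause about infinitely many quintic fields $K$ deserves one sentence of justification (e.g.\ via ramification of $\Q(E[5])$ at the varying primes of bad reduction), though the paper itself is equally terse on that point.
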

\subsection{$(\cC_5,\cC_{25})$.} Let $E/\Q$ be an elliptic curve such that $G_E(5)$ is labeled by \texttt{5B.1.1} and there exists a quintic number field $K$ with the property $E(K)_{\tors} \simeq \cC_{25}$. Then, by Lemma \ref{lem5} (\ref{i1_lem5}), $K$ is Galois. In particular $E/\Q$ has a rational $25$-isogeny. Then, we observe in \cite[Table 3]{Lozano} that  its $j$-invariant must be of the form:
$$
j_{25}(h)=\frac{(h^{10}+10h^8+35h^6-12h^5+50h^4-60h^3 +25h^2-60h+16)^3}{(h^5+5h^3+5h-11)}, \qquad\mbox{for some $h\in\Q$}.
$$
On the other hand, Zywina \cite[Theorem 1.4(iii)]{zywina} proved that there exists $s\in \Q$ such that $E$ is isomorphic (over $\Q$) to $\mathcal E_{6,s}$:
$$
\mathcal E_{6,s}\,:\,  y^2=x^3-27(s^4 - 12s^3 + 14s^2 + 12s + 1) x+54(s^6-18s^5+75s^4+75s^2+18s+1).
$$
The above $j$-invariants should be equal, so $j(\mathcal E_{6,s})=j_{25}(h)$ for some $s,h\in\Q$. This equality \href{http://www.uam.es/personal_pdi/ciencias/engonz/research/tables/tors5/Proposition16.txt}{\color{blue}{defines}} a non-irreducible curve over $\Q$ whose irreducible components are a genus $16$ curve and a genus $0$ curve.  It is possible to give a parametrization of the above genus $0$ curve such that $s=t^5$, where $t\in\Q$. That is, there exists $t\in\Q$ such that $E$ is $\Q$-isomorphic to $\mathcal E_{6,t^5}$. 

Now, let us define the quintic polynomial $p_{25}(x)$:
$${\small
\begin{array}{l}
p_{25}(x)=x^5 + (-5t^{10} - 12t^8 - 12t^7 - 24t^6 + 30t^5 - 60t^4 + 36t^3 - 24t^2 + 12t - 5)x^4\\
\quad  + (10t^{20} + 48t^{18} + 48t^{17} + 96t^{16} + 24t^{15} + 240t^{14} - 
    144t^{13} + 96t^{12} - 48t^{11} + 236t^{10} + 48t^8 + 48t^7 + 96t^6\\
\quad     - 264t^5 + 240t^4 - 144t^3 + 96t^2 - 48t + 10)x^3 + (-10t^{30} - 72t^{28} - 72t^{27} - 144t^{26} - 
    252t^{25} - 360t^{24} \\
\quad + 216t^{23} - 144t^{22} + 72t^{21} + 1914t^{20} + 720t^{18} + 720t^{17} + 1440t^{16} - 1800t^{15} + 3600t^{14} - 2160t^{13} + 1440t^{12} \\
\quad - 720t^{11} + 1914t^{10} - 72t^8 - 72t^7 - 144t^6 + 612t^5 - 360t^4 + 216t^3 - 144t^2 + 72t - 10)x^2\\
\quad + (5t^{40} + 48t^{38} + 48t^{37} + 96t^{36} + 312t^{35} + 240t^{34} - 144t^{33} + 96t^{32} - 
    48t^{31} - 4516t^{30} - 1584t^{28} - 1584t^{27} \\
    \quad - 3168t^{26} + 19944t^{25} - 7920t^{24} + 4752t^{23} - 3168t^{22} + 1584t^{21} - 18114t^{20} - 1584t^{18} - 1584t^{17} - 3168t^{16} - \\
    \quad 
    12024t^{15} - 7920t^{14} + 4752t^{13} - 3168t^{12} + 1584t^{11} - 4516t^{10} + 48t^8 + 48t^7 + 96t^6 - 552t^5 + 240t^4 - 144t^3\\
    \quad  + 96t^2 - 48t + 5)x - t^{50} - 12t^{48} - 
    12t^{47} - 24t^{46} - 114t^{45} - 60t^{44} + 36t^{43} - 24t^{42} + 12t^{41} + 2371t^{40} \\
    \quad + 816t^{38} + 816t^{37} + 1632t^{36} - 17880t^{35} + 4080t^{34} - 2448t^{33} + 1632t^{32} - 
    816t^{31} + 47294t^{30} - 13896t^{28} \\
    \quad - 13896t^{27} - 27792t^{26} + 34740t^{25} - 69480t^{24} + 41688t^{23} - 27792t^{22} + 13896t^{21} + 47294t^{20} + 816t^{18} + \\
    \quad 816t^{17} + 
    1632t^{16} + 13800t^{15} + 4080t^{14} - 2448t^{13} + 1632t^{12} - 816t^{11} + 2371t^{10} - 12t^8 - 12t^7 - 24t^6 \\
    \quad + 174t^5 - 60t^4 + 36t^3 - 24t^2 + 12t - 1.
\end{array}
}$$
Then $p_{25}(x)$ divides the $25$-division polynomial of $\mathcal E_{6,t^5}$. Fixing $t\in \Q$, \href{http://www.uam.es/personal_pdi/ciencias/engonz/research/tables/tors5/Proposition16.txt}{\color{blue}{we have}} that $\Q(\alpha)/\Q$ is a Galois extension of degree $5$ and $E(\Q(\alpha))=\langle R \rangle\simeq \cC_{25}$, where $p_{25}(\alpha)=0$ and the $x$-coordinate of $R$ is $3\alpha$. Note that $[5]R=(3t^{10} - 18t^5 + 3 , 108t^5)\in E(\Q)$.

We have proved the following result:

\begin{proposition}\label{inf_25}
There exist infinitely many $\Qbar$-isomorphic classes of elliptic curves $E/\Q$ and infinitely many quintic number fields $K$ such that $E(K)_{\tors}\simeq \cC_{25}$. All of them satisfy $E(\Q)_{\tors}\simeq \cC_5$.
\end{proposition}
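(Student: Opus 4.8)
The plan is to build an explicit one-parameter family. By Theorem~\ref{main2} and Lemma~\ref{lem5}, any $E/\Q$ that realizes $E(K)_{\tors}\simeq\cC_{25}$ over a quintic $K$ is non-CM, has $E(\Q)_{\tors}\simeq\cC_5$, and has $G_E(5)$ labeled \texttt{5B.1.1} or \texttt{5Cs.1.1}; so it suffices to produce infinitely many such curves with $G_E(5)$ labeled \texttt{5B.1.1}. For a curve of this type Lemma~\ref{lem5}(\ref{i1_lem5}) forces $K/\Q$ to be Galois, hence by Lemma~\ref{lem_chou} the curve admits a rational $25$-isogeny. The strategy is therefore to intersect the two (each genus-$0$) conditions ``$G_E(5)=$\texttt{5B.1.1}'' and ``$E$ has a rational $25$-isogeny'' on the $j$-line, and then to verify that the curves so produced genuinely acquire a point of order $25$ over a quintic field.

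Concretely, I would first invoke Zywina~\cite[Thm.~1.4(iii)]{zywina}: $G_E(5)$ labeled \texttt{5B.1.1} means $E\cong_\Q\mathcal E_{6,s}$ for some $s\in\Q$, with $j(\mathcal E_{6,s})$ a fixed rational function of $s$. On the other side, the $j$-invariants of rational elliptic curves admitting a rational $25$-isogeny form the genus-$0$ family $j_{25}(h)$, $h\in\Q$ (see \cite[Table~3]{Lozano}). Imposing $j(\mathcal E_{6,s})=j_{25}(h)$ cuts out an affine plane curve over $\Q$; the key step is to decompose it into irreducible components (here \texttt{Magma} is used) and isolate the genus-$0$ component, the remaining component (of genus $16$) contributing only finitely many examples by Faltings and thus being irrelevant to an infinitude statement. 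I would then parametrize the genus-$0$ component and observe that it can be written with $s=t^5$ for a free parameter $t\in\Q$; consequently $E_t:=\mathcal E_{6,t^5}$ has, for every $t\in\Q$, both a rational point of order $5$ and a rational $25$-isogeny.

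It remains to produce the quintic field and the $25$-torsion point. For this I would exhibit the explicit degree-$5$ factor $p_{25}(x)$ of the $25$-division polynomial of $\mathcal E_{6,t^5}$ displayed above, and check symbolically in \texttt{Magma} (treating $t$ as an indeterminate) that $\Q(\alpha)/\Q$ is Galois of degree $5$ when $p_{25}(\alpha)=0$, that the point $R$ with $x(R)=3\alpha$ has order $25$, and that $[5]R=(3t^{10}-18t^5+3,\,108t^5)\in E_t(\Q)$. Combining this with Lemma~\ref{lem5} (which bounds the $5$-primary part of $E_t(K)$ by $\cC_{25}$) and with Lemmas~\ref{lem_p} and~\ref{lem11} (which annihilate any prime-to-$5$ torsion over $K$, since $E_t(\Q)_{\tors}\simeq\cC_5$ and $E_t$ is non-CM, hence not one of the three exceptional curves of Lemma~\ref{lem11}) gives $E_t(\Q(\alpha))_{\tors}\simeq\cC_{25}$ and $E_t(\Q)_{\tors}\simeq\cC_5$ for all but finitely many $t$. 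Finally, $j(E_t)$ is a non-constant rational function of $t$, yielding infinitely many $\Qbar$-isomorphism classes; and since $X_1(25)$ has genus $\ge 2$, each fixed number field carries only finitely many $\Qbar$-classes of elliptic curves with a rational point of order $25$, so the fields $\Q(\alpha)$ cannot all be isomorphic to members of a finite list, which gives infinitely many quintic fields as well.

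I expect the main obstacle to be the middle step: recognizing that the curve $j(\mathcal E_{6,s})=j_{25}(h)$ is reducible, pinning down its genus-$0$ component, and extracting the clean parametrization $s=t^5$; after that, Step~3's conclusions follow from an explicit (if lengthy) division-polynomial computation, and everything upstream is a direct appeal to the structure theorems already quoted. A secondary point that needs care is the ``infinitely many fields'' claim, which relies either on the Faltings input above or on an explicit verification that the field discriminant of $\Q(\alpha)$ is unbounded as $t$ varies.
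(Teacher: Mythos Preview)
Your proposal is correct and follows essentially the same route as the paper: intersect Zywina's parametrization $\mathcal E_{6,s}$ for \texttt{5B.1.1} with the $j_{25}(h)$ family of rational $25$-isogenies, extract the genus-$0$ component parametrized by $s=t^5$, and then exhibit the explicit degree-$5$ factor $p_{25}(x)$ of the $25$-division polynomial of $\mathcal E_{6,t^5}$ to produce the quintic field and the point $R$ of order $25$ with $[5]R\in E(\Q)$. Your write-up is in fact slightly more complete than the paper's: you supply explicit arguments for the infinitude of $\Qbar$-isomorphism classes (non-constant $j$) and of quintic fields (via Faltings applied to $X_1(25)$), and you justify carefully that the full torsion over $\Q(\alpha)$ is exactly $\cC_{25}$ by invoking Lemmas~\ref{lem_p}, \ref{lem5}, and~\ref{lem11}, whereas the paper records these conclusions without spelling out the last steps.
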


\subsubsection{A $5$-triangle tale.} Let $E/\Q$ be an elliptic curve such that $G_E(5)$ is labeled by \texttt{5Cs.1.1} ($H_{1,1}$ in Zywina's notation). Zywina \cite[Theorem 1.4(iii)]{zywina} proved that there exists $t\in \Q$ such that $E$ is isomorphic (over $\Q$) to $\mathcal E_{1,t}=\mathcal E_{5,t^5}$. We observe in Table \ref{tableSutherland} that there exists a $\Z/5\Z$-basis  $\{P_1,P_2\}$ of $E[5]$ such that $E(\Q)_{\tors}=\langle P_2 \rangle \simeq \cC_5$,  $E(\Q(\zeta_5))_{\tors}=E[5]=\langle P_1,P_2 \rangle$. Now, since $\langle P_1\rangle$ and $\langle P_2\rangle$ are distinct $\Gal(\Qbar/\Q)$-stable cyclic subgroups of $E(\Qbar)$ of order $5$, there exist two rational $5$-isogenies:
$$
\xymatrix{
    &              E     \ar@{->}[dl]_{\phi_1}  \ar@{->}[dr]^{\phi_2}   &    \\
   E_1 & &  E_2,  }
$$
where the elliptic curves $E_1= E/\langle P_1\rangle$ and $E_2=E/\langle P_2\rangle$ are defined over $\Q$. Using Velu's formulae we can compute explicit equations of these elliptic curves:
$$
\displaystyle  E_1=\mathcal E_{6,t^5},\qquad\qquad  E_2 = \mathcal E_{5,s(t)},\,\,\mbox{where}\,\,\,s(t)=\frac{t(t^4+3\,t^3+4\,t^2+2\,t+1)}{t^4-2\,t^3+4\,t^2-3\,t+1},
$$
Then we have $G_{E_1}(5)$ is labeled by \texttt{5B.1.1} and $G_{E_2}(5)$ is labeled by \texttt{5B.1.2}. We observe that the elliptic curve $E_1$ is the one obtained in the previous section, that is, $E_1(\Q(\alpha))=\langle R \rangle\simeq \cC_{25}$, where $p_{25}(\alpha)=0$ and the $x$-coordinate of $R$ is $3\alpha$. In particular, $E_1$ has a rational $25$-isogeny. Note that $[5]R=Q_2=(3t^{10} - 18t^5 + 3 , 108t^5)$ is such that $E_1(\Q)[5]=\langle Q_2\rangle\simeq \cC_5$ and $E_1(L)[5]=E_1[5]=\langle Q_1,Q_2\rangle$ with $[L:\Q]=20$. If $\widehat{\phi_1}\,:\, E_1 \longrightarrow E$ denotes the dual isogeny of $\phi_1$, then we have $\phi_2\circ\widehat{\phi_1}(\langle R\rangle)=\mathcal O\in E_2$. That is, $\phi_2\circ\widehat{\phi_1},:\, E_2 \longrightarrow E_1$ is a rational $25$-isogeny.

\begin{remark}
\href{http://www.uam.es/personal_pdi/ciencias/engonz/research/tables/tors5/Remark.txt}{\color{blue}{There are}} only seven elliptic curves (\texttt{11a1}, \texttt{550k2}, \texttt{1342c2}, \texttt{33825be2}, \texttt{165066d2}, \texttt{185163a2} and \texttt{192698c2}) with conductor less than $350.000$ such that the corresponding mod $5$ Galois representation is labeled \texttt{5Cs.1.1}. All of them give the corresponding $5$-triangle with the associated elliptic curve (\texttt{11a3}, \texttt{550k3}, \texttt{1342c1}, \texttt{33825be3}, \texttt{165066d1}, \texttt{185163a1} and \texttt{192698c1} resp.) with $\cC_{25}$ torsion over the corresponding quintic number field. Notice that there are no more elliptic curves with conductor less than $350.000$ and torsion isomorphic to $\cC_{25}$ over a quintic number field.
\end{remark}


\section{examples}\label{sec_ex}
Given an elliptic curve $E/\Q$, we describe a method to compute the quintic number field where the torsion could grow. If $E$ is \texttt{121a2}, \texttt{121c2} or \texttt{121b1} we have proved in Lemma \ref{lem11} that the torsion grows to $\cC_{11}$ over the quintic number field $\Q(\zeta_{11})^+$. For the rest of the elliptic curves, we first compute $E(\Q)_{\tors}\simeq G\in\Phi(1)$. If $G\ne \cC_1,\cC_2,\cC_5$, then by Theorem \ref{main2} the torsion remains stable under any quintic extension. If $G=\cC_1$ or $\cC_2$ then, by Theorem \ref{main2}, the torsion could grow to $\cC_5$ or $\cC_{10}$ respectively. Now compute the $5$-division polynomial $\psi_5(x)$. It follows that the quintic number fields where the torsion could grow are contained in the number fields attached to the degree $5$ factors of $\psi_5(x)$. In the case $G=\cC_5$ the torsion could grow to $\cC_{25}$, and the method is similar, replacing the $5$-division polynomial by the $25$-division polynomial. We explain this method with an example.


\begin{example}
Let $E$ be the elliptic curve \texttt{11a2}. We compute $E(\Q)_{\tors}\simeq \cC_1$. Now, the $5$-division polynomial has two degree $5$ irreducible factors: $p_1(x)$ and $p_2(x)$. Let $\alpha_i\in\Qbar$ such that $p_i(\alpha_i)=0$, $i=1,2$. We deduce $\Q(\sqrt[5]{11})=\Q(\alpha_1)=\Q(\alpha_2)$ and $E(\Q(\sqrt[5]{11}))_{\tors}\simeq \cC_5$.
\end{example}

Table \ref{ex_5} shows examples where the torsion grows over a quintic number field. Each row shows the label of an elliptic curve $E/\Q$ such that $E(\Q)_{\tors}\simeq G$, in the first column, and $E(K)_{\tors}\simeq H$, in the second column, and the quintic number field $K$ in the third column.

\begin{table}[h]
\begin{tabular}{|c|c|c|c|}
\hline
$G$ & $H$ & quintic  & label \\
\hline
\multirow{2}{*}{$\cC_{1}$}  & $\cC_{5}$ & $\Q(\sqrt[5]{11})$ & \texttt{11a2} \\ \cline{2-4}
 & $\cC_{11}$ &  $\Q(\zeta_{11})^+$  & \texttt{121a2}\,\,,\,\,\texttt{121c2}\,\,,\,\,\texttt{121b1}\\ \hline
 $\cC_{2}$ & $\cC_{10}$ & $\Q(\sqrt[5]{12})$ & \texttt{  66c3} \\ \hline
 $\cC_{5}$ & $\cC_{25}$ & $\Q(\zeta_{11})^+$ & \texttt{11a3} \\ \hline
 \end{tabular}\\
 \caption{Examples of elliptic curves such that $G\in\Phi(1)$, $H\in\Phi_\Q(5,G)$ and  $G\ne H$.}\label{ex_5}
 \end{table}

\begin{remark}
Note that, although we have proved in Propositions \ref{inf_5-10} and \ref{inf_25} that there are infinitely many elliptic curves over $\Q$ such that the torsion grows over a quintic number field, these elliptic curve seems to appear not very often. \href{http://www.uam.es/personal_pdi/ciencias/engonz/research/tables/tors5/Remark.txt}{\color{blue}{We have computed}} for all elliptic curves over $\Q$ with conductor less than $350.000 $ from \cite{cremonaweb} (a total of $2.188.263$ elliptic curves) and we have found only $1256$ cases where the torsion grows. Moreover, only $40$ cases when it grows to $\cC_{10}$ and $7$ to $\cC_{25}$ (the elliptic curves \texttt{11a3}, \texttt{550k3}, \texttt{1342c1}, \texttt{33825be3} \texttt{165066d1}, \texttt{185163a1} and \texttt{192698c1}).
\end{remark} 

\begin{ack}
We thank \'Alvaro Lozano-Robledo, Filip Najman, and David Zureick-Brown for useful conversations. The author would like to thank the anonymous referee for useful comments and suggestions.
\end{ack}

\end{document}